\documentclass[12pt]{article}
\usepackage{graphicx} 
\usepackage[a4paper, total={6.5in, 9.5in}]{geometry}
\usepackage{amsmath,amssymb, amsthm}
\usepackage[normalem]{ulem}
\usepackage{natbib}
\usepackage{mathtools}
\usepackage{algorithm} 
\usepackage{algpseudocode}
\usepackage{color}
\newtheorem{theorem}{Theorem}[section]

\newtheorem{remark}{Remark}[section]
\usepackage{hyperref}
\setcounter{tocdepth}{2}
\usepackage[capitalise]{cleveref}
\usepackage{comment}
\usepackage{bbm}
\usepackage{authblk}

\DeclarePairedDelimiterX{\Xnorm}[1]{\bigg\lVert}{\bigg\rVert}{#1}
\newcommand\norm[1]{\big\|#1\big\|}

\setlength\parindent{0pt}
\newtheorem{lemma}{Lemma}[section]
\newcommand*{\E}{\mathbb{E}}

\newcommand*{\p}{\mathbb{P}}

\newcommand*{\N}{\mathbb{N}}

\definecolor{darkblue}{rgb}{.1, 0.1,.8}
\definecolor{darkgreen}{rgb}{0,0.8,0.2}
\definecolor{darkred}{rgb}{.8, .1,.1}
\definecolor{violet}{RGB}{148,0,211}

\DeclareMathOperator*{\argmax}{arg\,max}

\newcommand\floor[1]{\left\lfloor #1 \right\rfloor}
\newcommand{\indicator}[1]{\mathbf{1}_{\{#1\}}}

\title{Multiple change point detection in functional data with applications to biomechanical fatigue data}

\author[1]{Patrick Bastian}
\author[2]{Rupsa Basu}
\author[1]{Holger Dette}
\affil[1]{Ruhr-Universität Bochum, Germany}
\affil[2]{University of Twente., The Netherlands  and Universität zu Köln, Germany}
\begin{document}


\maketitle


\begin{abstract}

  Injuries to the lower extremity joints are often debilitating, particularly for professional athletes. Understanding the onset of stressful conditions on these joints is therefore important in order to ensure prevention of injuries as well as individualised training for enhanced athletic performance. We study the biomechanical joint angles from the hip, knee and ankle for runners who are experiencing fatigue. The data is cyclic in nature and densely collected by body worn sensors, which makes it ideal to work with in the functional data analysis (FDA) framework.

We develop a new method for multiple change point detection for functional data, which improves the state of the art 
with respect to at least two  novel aspects. First, the curves are compared with respect to their maximum  absolute deviation, which leads to a better interpretation of local changes in the functional data compared to classical $L^2$-approaches. Secondly, as slight aberrations are to be often expected in a human movement data, our method will not detect arbitrarily small changes but hunts for relevant changes, where maximum absolute deviation between the curves exceeds a specified threshold, say $\Delta >0$.
We recover multiple changes in a long functional time series of biomechanical knee angle data, which are larger than the desired threshold $\Delta$, allowing us to identify changes purely due to fatigue. In this work, we analyse data from both controlled indoor as well as from an uncontrolled outdoor (marathon) setting.
\end{abstract}

\textit{Keywords and Phrases: } Multiple change detection, relevant changes, functional data, biomechanical joint angles, human gait analysis
\section{Introduction}
\label{sec1} 
  \def\theequation{1.\arabic{equation}}	
\setcounter{equation}{0}

In this paper we develop a novel methodology for
detecting (multiple) change points  in functional  time series. Our work is motivated by data analysis in the field of human movement, a highly relevant subject area focused on comprehending the impact of disease and aging on physical ability as well as enhancing performance in sports. Human movement is often characterized by its continuous and repetitive motions, which in conjunction with dense data collection via sensors
naturally leads to a functional data framework. In Figure  \ref{fig:runner_n_angles} we display typical data  for the different lower extremity joints  from one runner. The left part of the figure shows  strides (cycles) corresponding to angular data measured at the hip, knee and ankles between consecutive contacts of the foot with the ground. The  resulting    functional time series of the different strides  of the runner are shown in the right part of Figure \ref{fig:runner_n_angles}. Within human movement analysis, it is particularly important to detect problematic movements under stress conditions like fatigue and to understand the adjustments made by the body in this case.
In order to do that, it is essential to detect \textit{when} fatigue has kicked-in and the 
 onset of these stressors are characterised by deviations in this type of data. However, this problem is a difficult one, as such subtle  changes, often characterised by spikes within the functions,  are not very visible in plots of consecutive strides  as displayed in  Figure \ref{fig:runner_n_angles}, respectively.
 Therefore, detecting changes in running data  and inferring that when a change occurs, it is indeed  due to fatigue is a very challenging task  amounting to looking for subtle yet significant changes in a long functional time series.

\begin{figure}[t]
    \centering
    \includegraphics[scale= 0.4]{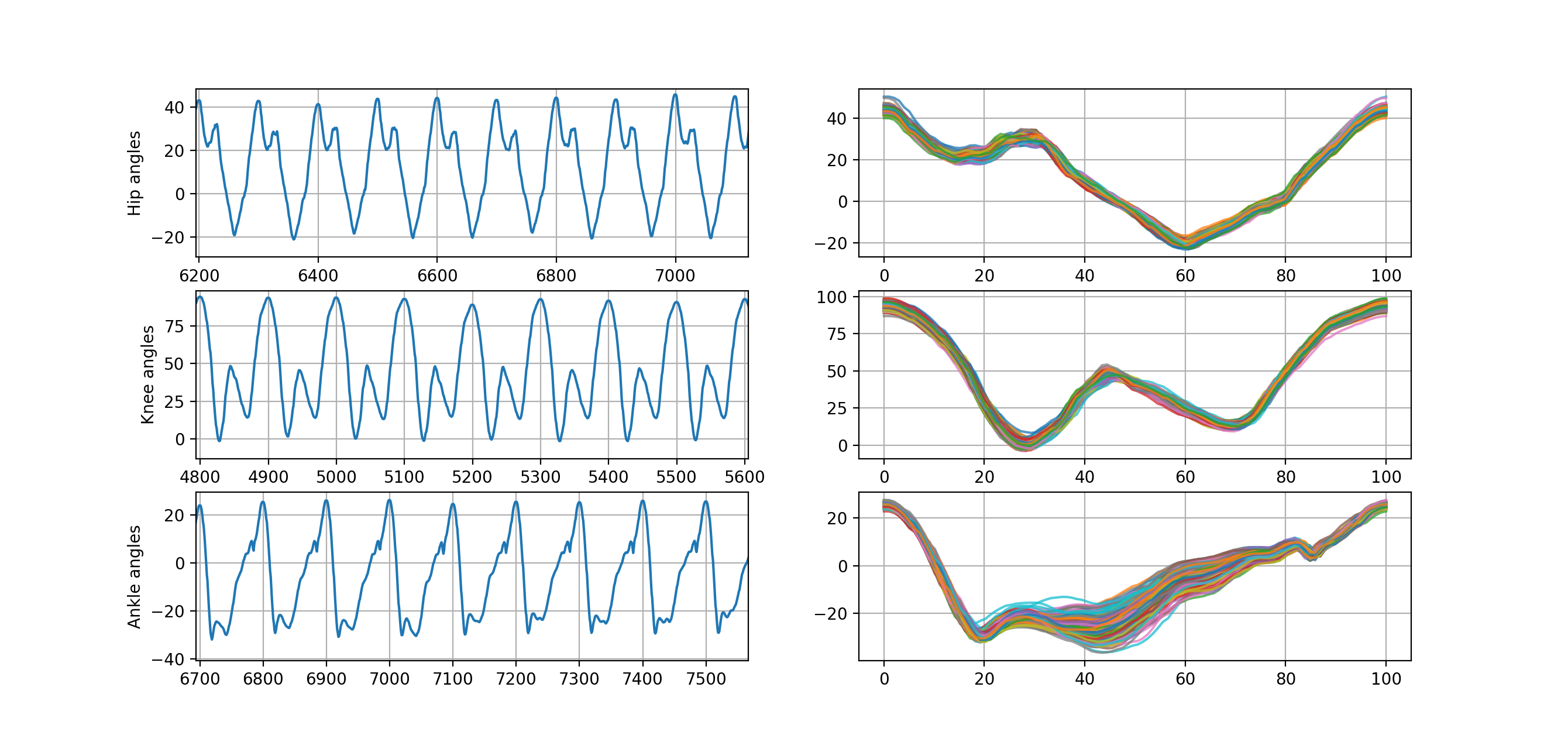}
    \caption{\it Biomechanical angle data from the hip (top), knee (middle), ankle (bottom)  for a single runner. Left panes:  Consecutive strides of a runner from a part of the run. Right panels:  Resulting  functional time series of the different strides.
     }
    \label{fig:runner_n_angles}
\end{figure}

Mathematically, this means that we are supremely concerned with locating multiple change points in a functional time series as it is to be expected that the adaptation of the body cannot be a one-time event.  In this regard, it is of importance to note that very often 
there appear ``small'' deviations in  movement data which are not caused by fatigue, but 
due to measurement errors and other environmental aspects during data collection. A typical example 
causing such  small changes in the functional time series is the shifting of sensors due to sweating.  Although, mathematically, the time points corresponding to these deviations could be considered  as change points, they are not of practical interest because the deviations at these points are  very small  as compared to the adaptations made by the body under fatigue. More precisely, in the examples under consideration, changes caused by  environmental aspects usually cause only  minor deviations, while changes due to fatigue lead  to larger deviations.
Thus statistical methodology is required which is able to detect only the ``scientifically relevant'' change points, that is the locations corresponding to changes due to fatigue. We therefore look at deviations (or spikes) in the functional data that are larger than a certain threshold. 

\begin{figure}
    \centering
    \includegraphics[scale=0.15]{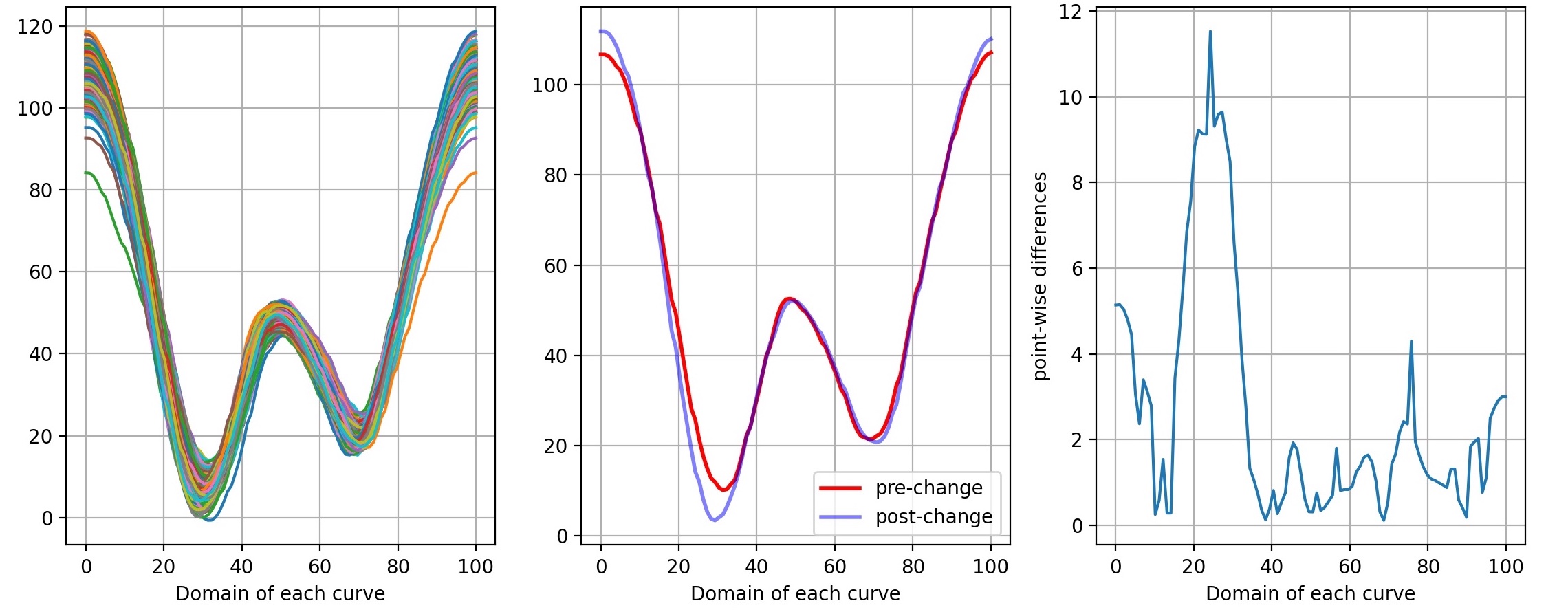}    
    \caption{\it Left:  knee angle data from each cycle over the course of the run. Middle:
mean curves, prior to- and after- the change point for a single runner. Right:   absolute  difference between the mean curves.}   
    \label{motiv_fig}
\end{figure}

This point of view raises the question how to measure deviations between two observations of the functional time series. Most of the literature has dealt with developing  $L^2$-space based methodology  for which there exists by now a fully fledged theory.  If one is interested in arbitrary small deviations  the choice of the norm does not matter (because the norm of the difference between two curves vanishes if and only if the difference vanishes as well). However, if one is  interested in 
differences  with a norm exceeding a certain threshold, the choice of the norm is a more delicate problem. We argue that in the present context of human movement data it is more reasonable to measure deviations with respect to the 
maximum deviation of the curves because of the sensitivity of the sup-norm to  local spikes. A typical situation  is displayed in  Figure \ref{motiv_fig}.
The middle panel shows the estimated mean of the curves 
of the functional time series (displayed in the left panel) before and after a relevant change point, which has been estimated by the procedure to be developed in this paper. In the right part  we plot the absolute difference of the two mean curves, and we observe 
one rather large spike. Such  local spikes are typical for this type of  data and may be easier to detect  using the sup-norm  instead of the $L^2$-norm. Therefore, by using the sup-norm, we capture local behaviour within the functional data, which may come in the form of short but significant spikes. This is particularly relevant for the biomechanical data under consideration given that human movement will not change dramatically at the stride level (in terms of shape, which would be captured well by $L^2$) but locally within the strides and may therefore  only   be detectable by using the sup-norm.   In this case  the change point problem  becomes much more challenging as we are abandoning the Hilbert-space setting in favor of a Banach-space framework, for which the theory is not so well developed.

\medskip

{\bf Our contribution } 
In this paper  we will develop  a new approach for detecting multiple change points in functional time series that distinguishes itself from the currently  available methodology in various aspects. First, since all functions 
considered in our applications are at least continuous and probably   smoother 
than that, the new change point detector will be tailored to applications for functional data in the space of continuous functions $C([0,1])$ where deviations are measured by the sup-norm. Second, compared to the literature, the procedure proposed in this paper detects relevant change points, which means deviations (measured by the sup-norm) exceeding a certain (positive) threshold. On the  one hand  this addresses  the practical demands in the human movement data where one is only interested in changes caused by fatigue. On the other hand,
looking only at relevant changes reflects the fact that in many applications it is not plausible 
that two functions of a time series  are exactly the same (even if we compare their expectations).  
This point of view is in line with 
 \cite{tukey1991}, who  argued in the context of multiple comparisons of means that  
{\it  $\ldots $
``All we know about the world teaches us that the
 effects of A and B are always different  - in some 
 decimal place - for any A and B.  Thus asking ``Are
 the effects different?'' is foolish''.  $\ldots $.}

Third, giving up on the theoretically convenient $L^2$-space 
and entering the Banach-space setting means that substantially more effort is necessary to establish the validity of our approach including  statistical guarantees.  This applies even more  when one also aims for detecting  only the  relevant change points as we leave methodologically convenient stationarity assumptions.  Fourth, to our knowledge, this is the first time that fatigue detection for lower extremity joint angles is carried out using the framework of change point detection for functional data analysis.
In particular,  the above breakthroughs allow us to study this type of data in a far more coherent manner, permitting deductions  on the characteristics,  which may not have been possible based on other methods. Fourth, such an analysis using advanced statistical methodology is a first of its kind for studying fatigue detection in biomechanical lower extremity joint angle data. While experts in biomechanical sciences have known that fatigue brings about pronounced changes in the way we move, it was so far unclear how this change actual impacts the movements of the joints, in particular the knee joint. Such a methodology allows us to study the individualised characteristics of each runner under stressful, prolonged running conditions. The novel aspect of this methodology is the dual contribution of relevant changes of size $\Delta$ as well as that of working with fully functional methodology in the supnorm.  The former, allows us to pick up changes which are larger than a certain size $\Delta$ while the latter enables us to identify the exact part of a cyclic movement that is changing under fatigue. This contributes majorly to our understanding how our body copes with fatigue as well as the straining effects of enduring such a coping mechanisms ultimately resulting in muscle wear and tear as well as injuries.

\medskip

\textbf{Related literature }  Change point analysis of time series data is by now a well studied field with an enormous amount of publications by numerous authors. 
Meanwhile, the literature on addressing the problem of detecting  multiple structural breaks is also extensive and exemplary we mention
(wild) binary segmentation 
\citep{fryzlewicz2014wild},  
pruned exact linear time tests  \citep{killick2012optimal,maidstoneetal2017}, moving sum scan \citep{eichingerkirch2020}
and simultaneous multiscale change point estimators 
\citep{frick2014multiscale,li2019multiscale,dette2020multiscale},
narrowest over threshold \citep{baranowskietal2019} and the references in these works. The  optimality  of univariate multiple change point detection procedures has recently been investigated by \cite{wang2020univariate}.   
Extensions have been made on multiple change detection for multivariate and high dimensional data;  see \cite{chofry2015,cho2016,wangtengyao,padilla2021optimal,kovacs2023seeded} among many others.   

For functional data, change point analysis has mainly been carried out under the assumption of one change point \citep[see, for example,][]{aue2009estimation,berkesetal2009,astonkrich,horvath2014testing,shapirov2016,bucchia2017change,Aue2017,STOEHR2021}. Methodology for detecting one relevant change point has been developed by \cite{dette2020functional,dettekokot} and   \citep{dette2021detecting} among others. 
Most of these work refers to $L^2$-space methodology \citep[see][for an exception]{dette2020functional}. 

The literature on detecting multiple change points in functional time series is more scarce and - to our best knowledge - we are only aware of the work \cite{chiouetal2019,harrisetal2022,rice2022consistency} and  \cite{Madrid2022}. The first three references are mostly concerned with segmentation methods for densely observed data while \cite{Madrid2022} also consider sparsely observed functional data, in particular none of them consider relevant change points nor do they measure the deviation between the curves by the sup-norm as we do.

\section{Relevant change points in functional data }
\label{sec2} 
  \def\theequation{2.\arabic{equation}}	
\setcounter{equation}{0}

Let  $C ( T)$ denote the space of continuous functions defined on the set $T$. Throughout this paper $T$ is either the interval $T=[0,1]$  or a rectangular of the form  $T=[u,v] \times [0,1]$  with $0 \leq u  < v \leq 1$.
We define by $\|  \cdot \|_\infty $ the sup-norm on $C(T)$ (the corresponding space will always be clear from the context), and denote for 
   $f \in C([u,v] \times [0,1])$  by 
   $$
   \| f (s, \cdot )\|_\infty  = \max_{t \in [0,1] } |f (s, t) | 
   $$
   the sup-norm of the function $ t \to  f(s, t )$ (for fixed $s$). 
   We
consider a triangular-array $\{ X_{n, j} |  \, j = 1, \dots, n \}_ {n \in \mathbb{N}}$ of $C ( [0,1])$-valued random variables 
with the representation
\begin{align}\label{eqn:model}
    X_{n, j} = \mu_{n, j} + \epsilon_{n,j},  \quad \quad j = 1, \dots, n,  
\end{align}
where $\{\epsilon_{n,j} \}_{j=1, \ldots, n} $  is a row-wise centred stationary process 
and $\mu_{n, j} = \E [X_{n, j}] \in C ( [0,1])$ denotes the mean function of  $X_{n, j}$. 
For  the sake of a simple notation,  we will  suppress 
in this section the dependence of the functions on the sample size $n \in \N$  and 
use the notations 
$X_j, \epsilon_j$  instead of
 $X_{n,j},  \epsilon_{n,j}$.
Further, 
assumptions regarding the probabilistic structure of the random elements will be stated below in Section \ref{sec4}, where we discuss the theoretical properties of the proposed procedure. 

We assume that there exists  a  set 
$$
S = \{s_1 , s_2, \dots, s_m\}\subset (0,1)
$$
of change points in the sequence of  mean functions with $s_1 \leq  s_2 \leq   \dots \leq s_m $, where $m$ is finite in the sense that it does not depend on $n$.  More precisely,  for any $1 \leq i \leq m <\infty$ we assume 
\begin{align*}
   \mu_i:= \mu_{\floor{ns_{i-1}}+1}=\mu_{\floor{ns_{i-1}}+2}=...= \mu_{\floor{ns_{i}}} \neq \mu_{i+1}
\end{align*}
where we adopt the convention $s_0=0$ and $s_{m+1}=1$. 
For example,  $\mu_1$ is the mean function before the first  change point   $\floor{ns_{1}}$, $\mu_2$ is the mean function between the change point  $\floor{ns_{1}}$ and  second change point  $\floor{ns_{2}}$ and so on. We are interested in those change  points in $S$, where the maximal absolute deviation between the mean functions before and after the change point exceeds a given threshold, say $\Delta >0$. The choice of this threshold  is  problem specific 
as  it defines, when a change is considered as relevant. For the human movement data used in this work we discuss this choice in more detail in Section \ref{sec4}.

 Our goal is to detect all locations of relevant changes, that is to estimate the set
\begin{align} \nonumber 
    S_{\rm rel}&=\big \{s_i \in \{ s_1,...,s_m \}  | ~\| {\mu_{i+1}-\mu_{i}} \|_\infty >\Delta \big  \}   
\end{align} 
and  control  the statistical error of the estimator. 
 For this  purpose  we proceed in two steps: 
\medskip
 \textbf{{Step 1:}} We estimate (consistently)     the number of all change  points and their locations by a particular binary segmentation algorithm. Note that in this step, we recover change points  which may or may not be of a relevant size. We denote the estimated number and the estimated set of 
    change points  by  $\hat m$ and  
    \begin{align}
        \hat S = \{\hat{s}_1, \hat{s}_2, \dots, \hat{s}_{\hat m}\}, 
    \nonumber 
    \end{align} 
    respectively (we define $\hat{s}_0=0$ and $\hat{s}_{\hat m +1}=1$).
    This part of the procedure is described in  Algorithm \ref{basuAlg002} below.
 \medskip 
 \newline
  \textbf{{Step 2:}} 
     We investigate for each interval $[\hat s_{i-1}, \hat s_{i+1}]$  ($i=1, \ldots , \hat m$) if it contains  a relevant change point. This is done by  defining for each interval detectors, say   $\hat T_{n,1}, \ldots , \hat T_{n ,\hat m}$, which are then aggregated by the maximum operator, that is $\hat T_n = \max_{i=1, \ldots , \hat m} \hat T_{n,i}$.
     Next we calculate a quantile, say $q_{1-\alpha}^*$ by a non-standard  bootstrap procedure such that 
   $$
    \limsup_{n \to \infty}\p(\hat T_n>q^*_{1-\alpha})\leq \alpha~
    $$
    whenever $S_{\rm rel} = \emptyset $.
   Finally, we   define the set of estimated relevant change points by
 \begin{align}
     \label{d2}
     \hat S_{\rm rel}&= \big \{\hat s_i \in   \{ \hat s_1,...,\hat s_{\hat m} \}  \big | ~ \hat T_{n,i}>q^*_{1-\alpha} \big  \}~.
 \end{align}

Note that in the case where Step 1 does not recover any change points, Step 2 is not  executed.  The details of  Step 1 and 2 are explained 
in the following Section \ref{sec21}.
 and \ref{sec22}, respectively.

\subsection{A first segmentation of the functional time series}
\label{sec21}

Binary segmentation is a very well known algorithm  for the multiple change detection scenario.  This algorithm  scans through the whole data set and looks for a first change point. When such a change point is detected, the data is divided into two samples before and after this initial change point and the procedure is repeated for each of the segments. The algorithm stops as soon as all the segments so far return a lack of change point. Since its introduction by  
\cite{voi1981}
 binary segmentation has been used and investigated by numerous authors \citep[see][among  many others]{fryzlewicz2014wild,baranowskietal2019,kovacs2023seeded}. However the theoretical properties of the algorithm for functional data are less well understood.
In order to identify all change points $s_1 , \ldots , s_m $ and the number of change points $m$ in the present context we define  for $l, r \in \{ 1, \ldots , n\} $  $( l < r) $, $ t \in [0,1]$, 
the sequential processes (in  $s \in [ \frac{l+1}{n},\frac{r}{n} ]$) 
 \begin{align}  
 \label{hd1}
        \hat U_{l,r}(s, t) = \frac{1}{r-l} \Big ( \sum_{j =l+1}^{\lfloor sn \rfloor} X_{j} (t) + (r-l) \Big  (s- \frac{\lfloor s(r-l) \rfloor }{(r-l)} \Big ) X_{ r } (t) - \frac{\lfloor sn \rfloor-l}{r-l}\sum_{j= l+1}^{ r} X_{j} (t)\Big ),   
    \end{align}
    and determine the change points by Algorithm \ref{basuAlg002}. We will establish  in Theorem  \ref{consistent_num} in Section \ref{sec4}  the validity  of this procedure. In other words: Algorithm \ref{basuAlg002} provides consistent 
 estimators for both the location and the number of the change points.
    
\begin{algorithm}
    \begin{algorithmic}[1]
    \State \textbf{Fix} $t_0 \leftarrow $ first time point  and $T \leftarrow$ 
   last time point   of sample
    \State  \textbf{Fix} $ \hat  m \leftarrow 0$ (estimate of the number of change points)
        \State   \textbf{Fix} $ \hat S \leftarrow  \emptyset $ (set of estimated change points)
    \If{ $T- t_0 \leq 1$} \State STOP
    \Else   $\quad \hat{k}= \argmax_{s \in t_0 ,...,T } \norm{{U}_{t,T} (s/n, \cdot)}_2$   \\
    $\quad\quad  ~~~\mathcal{U}= \norm{{U}_{t_0,T} (\hat{k}/n, \cdot)}_2  $
    \If{ $\mathcal{U} > \xi_n$} 
     \State $\hat m \leftarrow \hat m+1 $ 
    \State  $\hat S  \leftarrow  \hat S  \cup \{ \hat k\} $ 
    \State  \textbf{run} BINSEG ($t_0, \hat{k},  \xi_n$) and BINSEG ($\hat{k} , T, \xi_n$)
    \EndIf
    \EndIf
    \State \textbf{Order} the detected change points,  i.e. $ \hat S = \{ \hat{k}_{1}, \hat{k}_{2}, \dots, \hat{k}_{\hat m}\} $  
    with $\hat{k}_1< \hat{k}_2  < \dots <  \hat{k}_{\hat m}$
   
     \State {\textbf{For} $i=1, \ldots , \hat m $ define:  $\hat s_0   \leftarrow 0$; 
      $\hat s_i   \leftarrow \hat{k}_{
     i}/n $;  $\hat s_{\hat m +1}    \leftarrow 1$}  
    \end{algorithmic}
    \caption{\textbf{Function:} BINSEG($t_0, T,  \xi_n$), for estimating change points in the sequence $X_{t_0}, \ldots , X_T$.}
    \label{basuAlg002}
\end{algorithm}

In the upper part of Figure \ref{fig:basuFSig01} we illustrate the application of Algorithm \ref{basuAlg002} for  
a full functional time series of biomechanical knee angle data, collected  from one  athlete. The individual component curves which correspond to each cycle during the course of the run (seen clearly in the right panel of  Figure \ref{fig:runner_n_angles}) are not visible here as the data is collected via a fatiguing protocol and hence consists of a large number of cycles.  The red vertical lines show the estimated change points found using Algorithm \ref{basuAlg002}. Note that as seen, not all change points determined in this step might be relevant. In  the  lower   part of Figure \ref{fig:basuFSig01}
 we display the differences of the estimates of the mean functions 
on the four segments identified by the three estimated change points. 
We observe that the differences 
between the two segments separated by  the first change point   are 
 much larger than the other two. As will be seen later, some changes detected in this step could be due to unforeseen obstructions in data collection such as shifting of sensors due to sweating, pedestrians on the path etc.

\begin{figure}
\centering
\hspace*{-40pt}
\includegraphics[scale = 0.48]{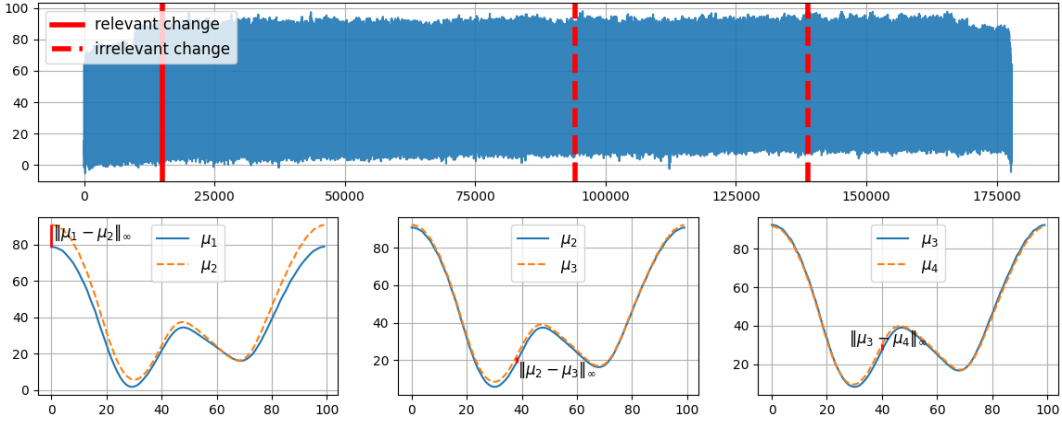}
\caption{\it Biomechanical knee angle data for a single runner. Upper part: The full functional time series with vertical lines denoting change points detected by Algorithm \ref{basuAlg002}. The crosses denote the relevant change points detected by Algorithm \ref{basuAlg001}: only the first change point is relevant.
   Lower part: Mean functions  on the different segments determined by the estimated change points, with the red vertical lines showing the maximum deviation between them. 
  The sample size for this runner is $n = 1800$.   }
    \label{fig:basuFSig01}
\end{figure}

\subsection{Hunting for relevant change points}
\label{sec22}
In this section we describe the details for Step 2 of our approach. In particular we will 
 motivate a new bootstrap procedure, which is used to define quantiles, such that 
 all relevant change points  in the functional time series   are detected with a statistical guarantee.
  More specifically, recall that  
  $\hat{k}_{1} < \hat{k}_{2}<  \dots < \hat{k}_{\hat m}$ denote the estimated change  points from  Algorithm \ref{basuAlg002} and that   
  $0= \hat{s}_0  < \hat  s_1 <  \ldots   < \hat s_{\hat m} <  \hat s_{\hat m +1}=1 $ denote the corresponding values scaled to the interval $[0,1]$. Further, for $i=1, \ldots ,  \hat m$  let $\hat h_i$ 
denote the linear transformation from the interval $[\hat s_{i-1},\hat s_{i+1}]$  onto $[0,1]$, that is 
\begin{align}
\nonumber 
\hat h_i (s) = {  \frac{s-\hat s_{i-1}}{ \hat s_{i+1}-\hat s_{i-1}} }~.  
\end{align} 
and let 
\begin{align}
\nonumber 
 h_i (s) = { \frac{s-  s_{i-1}}{ s_{i+1}- s_{i-1}}  }
\end{align}
be the deterministic analog $(i=1, \ldots m$). We recall the definition  of the CUSUM statistic \eqref{hd1} for a given functional data stream and note that it can be shown that
\begin{align}
   \E[ \hat U_{\hat k_{i-1},\hat k_{i+1}}(s,t)]\simeq (h_i(s \land s_i)-h(s) h_i( s_i))(\mu_{i}(t)-\mu_{i+1}(t))+o_\p(1), \nonumber 
\end{align}
Therefore,  the statistic 
\begin{align} \nonumber %
    \hat M_{n,i}= 
    \norm{ \hat U_{\hat k_{i-1},\hat k_{i+1}}}_\infty 
\end{align}
is a reasonable estimate of the size of the change $h_i( s_i)(1-h_i(s_i))\|{\mu_{i+1}-\mu_{i}}\|_\infty$ at the point $s_i$. To check if the change in the $i$th interval is relevant we will compare $ \hat M_{n,i} $  with $ h_i(s_i)(1- h_i(s_i)) \Delta$ using the detector
\begin{align}
\label{d4}
    \hat T_{n,i}=\sqrt{\hat n_i}\big ( \hat M_{n,i}-\hat h_i(\hat s_i)(1-\hat h_i(\hat s_i))\Delta\big )
\end{align}
where $\hat n_i = \lfloor n \hat s_{i+1} \rfloor-   \lfloor n \hat s_{i-1} \rfloor$ (the scaling factor $\sqrt{\hat n_i} $ becomes clear later, see Remark \ref{rem1a} below). The different statistics are then aggregated by the maximum operator, that is 
\begin{align}
    \hat T_n:=  \max_{1 \leq i \leq \hat m} \hat T_{n,i} ~.
    \label{basuT00}
\end{align}
We would like to  use quantiles of this statistic to detect relevant change points. 
However, it turns out that even asymptotically the   distribution of $\hat T_n$ depends in a complicated manner on several parameters, which are difficult to estimate
(see Remark \ref{rem1a} below).
As an alternative we will develop a non-standard bootstrap method to obtain (asymptotically)  upper bounds for these quantiles.
To explain our approach we  denote the sample means 
before and after the estimated change point $\hat k_i$, calculated
from the sample $\{ X_{k_{i-1}+1}, X_{k_{i-1}+2} \ldots , X_{k_{i+1}}\}  $, by
\begin{align*}
    \hat{\mu}_{1,\hat k_i} = \frac{1}{\hat k_i      -  \hat k_{i-1} } \sum_{j = \hat k_{i-1}+1}^{\hat k_i } X_{ j} \quad \text{ and } \quad \hat{\mu}_{2,\hat{k}_i} = \frac{1}{ \hat k_{i+1}  -  \hat k_i} \sum_{j = \hat k_i  + 1}^{\hat k_{i+1}} X_{j} ~.
\end{align*}
Further, let $(\xi_k^{(1)}: k \in \N ), \dots, (\xi_k^{(R)}: k \in \N ) $ denote independent sequences of i.i.d. standard normal distributed random variables. 
 On the rectangle
$[ \hat s_{i-1} , \hat s_{i+1}] \times [0,1]$ we consider the processes $\hat{B}_{i,n}^{(1)}, \dots, \hat{B}_{i,n}^{(R)}$ defined 
by  
 \begin{align}
\nonumber 
     \hat{B}_{i,n}^{(r)} (s, t) &= \frac{1}{\sqrt{\hat{n}_i}} \sum_{k= \hat k_{i-1}}^{\floor{sn}} \frac{1}{\sqrt{L}} \bigg( \sum_{j=k}^{k+L-1} \hat{Y}_{j} - \frac{L}{\hat n_i} \sum_{j= \hat k_{i-1}}^{\hat k_{i+1}} \hat{Y}_{j} (t)\bigg) \xi_k^{(r)}\\
     & + \sqrt{\hat n_i} \bigg( s- \frac{\floor{s \hat n_i}}{\hat n_i}\bigg) \frac{1}{\sqrt{L}} \bigg( \sum_{j= \floor{s n} +1}^{\floor{s n}+L} \hat{Y}_{j} - \frac{L}{n} \sum_{j= \hat k_{i-1}}^{\hat k_{i+1}} \hat{Y}_{j} (t)\bigg)\xi_{\floor{sn}+1}^{(r)},
     \nonumber %
 \end{align}
 where 
 \begin{align}
     \hat{Y}_{j}  = X_{j} - (\hat{\mu}_{2, \hat{k}_i}- \hat{\mu}_{1, \hat{k}_i}) \indicator{ j>\floor{\hat{s}_i n}} , \quad j = 1, \dots, n \, ;\, 
     \nonumber %
 \end{align}
 and $L \in \N$ defines the  length  of the  blocks and satisfies $L /n \rightarrow 0$  and $L \rightarrow \infty$ as $  n \rightarrow \infty$.  
We then consider a centered 
 bootstrap analogue 
 \begin{align}
\nonumber 
     \widehat{W}_{i,n}^{(r)} (s, t) = \hat{B}_{i,n}^{(r)} (s, t) - \hat h_i(s) \hat{B}_{i,n}^{(r)} (\hat s_{i+1}, t), 
 \end{align}
of the process $\hat U_{\hat k_{i-1},\hat k_{i+1}}$  and define for each  estimated change point $\hat{s}_i$   the set
       \begin{align}
        \hat{\mathcal{E}}_{i}^{\pm} = \Big  \{ t \in [0,1] : \pm (\hat{\mu}_{1, \hat{k}_i} (t)- \hat{\mu}_{2, \hat{k}_i} (t)) \geq \| \hat{\mu}_{1, \hat{k}_i} - \hat{\mu}_{2, \hat{k}_i} \|_\infty  - \frac{c \log  n}{\sqrt{n}} \Big \}, \label{extremal_sets}
    \end{align}
where  $ c>0$ denotes a constant. It can be shown that these sets are  consistent estimates of  the extremal  sets 
\begin{align}
\nonumber 
    \mathcal{E}_i^{\pm}:= \big \{ t\in [0,1] \big | ~  \mu_{i}(t)-\mu_{i+1}(t) =  \pm \| \mu_{i}-\mu_{i+1} \|_\infty \big  \},    
\end{align}
which contain the points, where the function  $t \to  ({\mu_{i+1} (t) -\mu_{i} (t) }) $  attains its sup-norm 
$\| \mu_{i+1}-\mu_{i}\|_\infty  $. 
Note that these sets depend on the (unknown) mean functions before and after the $i$th change point.
We  then suggest to consider
the bootstrap statistics
\begin{align} \label{d21}
    \hat T_{i}^{(r)} &= \max \Big\{ \sup_{t \in \mathcal{\hat E}_{i}^+} \widehat{W}_{i,n}^{(r)} (\hat s_i, t) , \sup_{t \in \mathcal{ \hat E}_{i}^-} (-\widehat{W}_{i,n}^{(r)}(\hat s_i, t))\Big \} 
\\
\label{d22}
\hat T_n^{*,(r)} &= \max_{i=1}^{\hat m}  \hat T_{i}^{(r)} 
\end{align}
and define  
$      q^*_{1-\alpha}$ 
 as the $(1-\alpha)$-quantile of bootstrap statistic $\hat T_n^{*,(r)} $. 
Finally,  the set of relevant change points  is estimated by \eqref{d2}.
 The implementation of this procedure is described in Algorithm \ref{basuAlg001}, which defines an estimate of $      q^*_{1-\alpha}$ by resampling.

\begin{algorithm}[t]
	\begin{algorithmic}[1]
	\State \textbf{Compute} the number $\hat m$ and all (ordered) change points $\hat{S} = \{\hat{s}_1, \dots, \hat{s}_{\hat m} \}$ using BINSEG($1,n, \xi_n$)
		\State \textbf{Fix} { block length $L$,  number of bootstrap replications $R$ and constant $c>0$ }
  \For{ $1= 1, \ldots , \hat m$ }
		\State \textbf{Compute}  the estimates $\hat {\mathcal{E}}_i^{\pm} $ of the extremal sets 
defined 
 in \eqref{extremal_sets}
		\EndFor
         \For{ $r = 1, \dots, R$ 
         }
         \State \textbf{Compute} $\widehat{W}_{i,n}^{(r)} (s, t) $
         \If{ $\hat {\mathcal{E}}_i^{\pm} \neq \emptyset $}
         \State $\hat T_{i,n}^{(r)} =  \max \{  \sup_{t \in \hat {\mathcal{E}}_i^+} \widehat{W}_{i,n}^{(r)} (\hat s_i, t) ,  \sup_{t \in \hat {\mathcal{E}}_i^-}  -\widehat{W}_{i,n}^{(r)} (\hat s_i, t)\}$
         \Else   \textbf{  set} $\hat T_{i,n}^{(r)} = \pm \infty $
         \EndIf
         \State \textbf{Compute} $\hat T_n^{*,(r)} = \max_i \hat T_{i,n}^{(r)}$
         \EndFor	
	\State \textbf{Compute  }  $q^*_{1-\alpha} \leftarrow $ 
 as the   empirical $(1-\alpha)$-quantile of bootstrap sample  $\hat T_n^{*,(1)} . \ldots , \hat T_n^{*,(R)} $ 
	\State \textbf{Estimate } the relevant change points by \eqref{d2}
	\end{algorithmic}
 \caption{Estimation of relevant  change points}
 \label{basuAlg001}
\end{algorithm}

\begin{remark} \label{rem1a}
    {\rm  We give some motivation 
    why this bootstrap procedure can be used to generate quantiles in the present context. 
    If all changes are not  relevant, that is $\max_{1 \leq k \leq m}\norm{\mu_{k}-\mu_{k-1}}_\infty \leq \Delta$, it can be shown 
    (see Section \ref{sec6} for details)
    that 
    the statistic $\hat T_n$ in \eqref{basuT00} is bounded by
    $ \hat D_n:=\max_{1 \leq i \leq \hat m}  \hat D_{n,i}$ with probability converging to $1$  as $n \to \infty$, where
\begin{align} \label{d4a}
 \hat D_{n,i}:=\sqrt{\hat n_i}\left(\hat M_{n,i}-\hat h_i(\hat s_i)(1-\hat h_i(\hat s_i))\norm{\mu_{i+1}-\mu_{i}}_\infty \right). 
\end{align}
Moreover, there even is equality if $\norm{\mu_{i+1}-\mu_{i}}_\infty = \Delta$ for all $i=1, \ldots , m $.
One can show (see Section \ref{sec6} for details) that 
    \begin{equation}
    \label{d9}
    \begin{split} 
    \hat D_{n,i} & \overset{\mathcal{D}}{\rightarrow} D(\mathcal{E}_i)  :=\max \Big \{\sup_{t \in \mathcal{E}_i^+}\mathbb{W}(h_i(s_i),t), \sup_{t \in \mathcal{E}_i^-}\mathbb{W}(h_i(s_i),t)  \Big\}, \\
    \hat D_n  :=\max_{i=1}^m    \hat D_{n,i} & \overset{\mathcal{D}}{\rightarrow}  D(\mathcal{E}):=\max_{i=1}^m  D(\mathcal{E}_i) \\
    & ~~~~~~~~~~~~ =
    \max_{i=1}^m \max \Big \{\sup_{t \in \mathcal{E}_i^+}\mathbb{W}(h_i(s_i),t), \sup_{t \in \mathcal{E}_i^-}\mathbb{W}(h_i(s_i),t)  \Big\}
    ~.
        \end{split} 
    \end{equation}  
Here the symbol $\overset{\mathcal{D}}{\rightarrow}$ denotes weak convergence of real valued random variables and $\mathbb{W}$ is a mean zero Gaussian process 
on $[0,1] \times [0,1] $ with covariance structure given by \begin{align} 
\nonumber 
      \text{\rm Cov}(\mathbb{W}(s,t),\mathbb{W}(s',t'))=(s\land s' -ss')  \sum_{i=-\infty}^{\infty} \text{\rm Cov}(\epsilon_{0}(t),\epsilon_{i}(t))~.
\end{align}
The bootstrap statistics 
$\hat T_i^{(r)}$ and $\hat T_n^{*,(r)}$
in   \ref{d21}, \ref{d22} mimic asymptotically the distributional properties 
of the random variables $\hat D_{n,i}$ and $\hat D_n$
defined in \eqref{d4a} and  \eqref{d9}, respectively. As $\hat T_n$ is asymptotically bounded by $\hat D_n$, we have 
$$
\mathbb{P} \big ( \hat T_n \leq  q_{1-\alpha}^* \big ) 
\geq \mathbb{P} \big ( \hat  D_n > q_{1-\alpha}^* \big ) + {o}(1)=  1 - \alpha +{o}(1) ~
$$
as $n \to \infty$,  and this statement is sufficient for proving consistency of the estimated set of relevant change points \eqref{d2} for $S_{\rm rel}$.
Details  can be found in Section \ref{sec4}.
}
\end{remark}

\begin{remark}  ~~\\ 
{\rm 
 
\begin{itemize}
    \item[(a)] It is  possible develop a similar  algorithm for the $L^2$- instead of the sup-norm. In fact, such a procedure  would not require the estimation of the extremal sets.  However, the choice of the threshold for this norm is conceptually 
(not mathematically) more difficult, and we   emphasize that  one of the main motivations for using  the sup-norm in our approach is the easy  and very  natural interpretation of the threshold in the considered application.     
       \item[(b)]
       In this paper we assume that at each time point  the full trajectory is observed.
        Our method is also applicable  (and its validity can be established) to dense and discrete observations from the trajectory. In particular, in the considered application the 
        hip, knee and ankle angles are continuously recorded resulting in an extremely dense grid, where the trajectories are observed.
 \item[(c)] 
Note that alternatively to the above retrospective change detection,  sequential change point analysis of such a data is of prominent interest in the goal of injury prevention. One  aim in this collaboration project \textit{Sports, Data, and Interaction\footnote{\url{http://www.sports-data-interaction.com/} }}  is to set up an interactive system to provide real-time feedback to the athlete.  In this scenario, the  idea is to have the least invasive approach of recording biomechanical joint angles, for example, via a video camera. When such a data is studied by sequential analysis, the athlete or trainer could be alerted of a change  and that the joint under consideration is moving differently and appropriate updates to training strategies may to be made. This is however a future direction that may be pursued and we do not look at this scenario here. 
    \item[(d)]  The data set under consideration consists of $3$-dimensional functional time series (for the hip, knee and ankle angles).  In the discussion so far we have developed methodology for finding multiple change points in  each component separately. 
Our approach can be generalized to find multiple change points in  multivariate functional time series, and we briefly indicate here how this can be done. 
\\
For the first step we propose to use binary segmentation for each of the coordinates (of course, any other consistent procedure can be used).
    For the second step we have multiple ways to define what a relevant change is, depending on whether or not we would like to consider changes in the different coordinates jointly or separately. For the latter one could simply search for relevant changes (in the sense we have used previously) in any of the coordinates, i.e. for each  coordinate $\ell =1,2,3$ we define  the analog of the statistic \eqref{d4}, say
    $$
       \hat T_{n,i}^{(\ell )} =\sqrt{\hat n_i}\big ( \hat M_{n,i}^{(\ell )}-\hat h_i(\hat s_i)(1-\hat h_i(\hat s_i))\Delta_\ell\big )
    $$
    with thresholds $\Delta_1,\Delta_2,\Delta_3$,  respectively, and then additionally take the maximum over all three coordinates in \eqref{basuT00}, that is
   \begin{align}
       \label{basuT00a}
     \hat T_n:=  \max_{1 \leq i \leq \hat m} \max_{\ell=1}^3 \hat T_{n,i}^{(\ell )} ~.
   \end{align}
    If we want to consider the changes in all coordinates simultaneously we could  first  aggregate the statistics $\hat M_{n,i}^{(\ell )}$ using a functional such as $\phi (x) = \| x \|_q = \big ( \sum_{\ell=1}^3 |x_i|^q \big )^{1/q}$ ($1 \leq q \leq \infty$) and then rescale it, that is 
    \begin{align*}
        \hat T_{n,i} =\sqrt{\hat n_i}\big \{  \phi \big ( (  \hat M_{n,i}^{(1 )}, \hat M_{n,i}^{(2 )} , \hat M_{n,i}^{(3 )} )\big ) -\hat h_i(\hat s_i)(1-\hat h_i(\hat s_i))\Delta\big \}  ~.
    \end{align*}    
After this we can  proceed as in \eqref{basuT00}.
      Multivariate analogs of Algorithm \ref{basuAlg001} can  be developed for both cases and  their  validity can be shown by similar arguments as given in the proof Theorem \ref{thm3}.  As the foremost goal of the {\it Sports, Data and Interaction} team was injury prevention in the knee joint we do not pursue these approaches here  further.
\end{itemize}
}
\end{remark}

\section{Finite sample properties}
\label{sec3} 
  \def\theequation{4.\arabic{equation}}	
\setcounter{equation}{0}

In this section we study the finite sample properties of Algorithm   \ref{basuAlg001} by means of a simulation study and illustrate its practical application in two data examples consisting of fatigue data from a laboratory setup as well as from a marathon run.

Note that  Algorithm \ref{basuAlg002} and \ref{basuAlg001} require the specification of several tuning parameters. This includes firstly,  the threshold value for locating change points in the binary segmentation $\xi_n$ in Algorithm \ref{basuAlg002}.  
Following a recommendation in 
\cite{rice2022consistency} 
we use $\xi_n = \hat{\sigma}_n \sqrt{3 \log(n)}$, where 
$\hat{\sigma}_n^2 = \text{median}\left(\|X_{i+1} - X_i\|^2_2 /2, \, i = 2, \ldots, n\right)$.  For the parameter $c$ required  in the estimation of the extremal sets in \eqref{extremal_sets} we choose  $c = 0.1$ as recommended 
by \cite{dette2020functional}. Furthermore, an initial robustness analysis showed that the procedure is rather stable with respect to this choice of $c$ (the details are omitted for the sake of brevity). The number of bootstrap repetitions  is always fixed as $R = 1000$. The block-length parameter $L$ in the multiplier bootstrap  is selected by a  method proposed by  \cite{rice2017plug}, which requires the choice of a weight function. We have implemented their algorithm for the  Bartlett, Parzen, Tukey-Hanning and Quadratic spectral kernel and  observed that the quadratic spectral weight function yields  the most stable 
 results  (these 
 findings are not displayed for the sake of brevity). 
 Finally, unless stated otherwise we use $\alpha = 10\%$ for  the  quantile $q^*_{1-\alpha} $.

\begin{figure}[t]
    \centering
    \includegraphics[scale= 0.4]{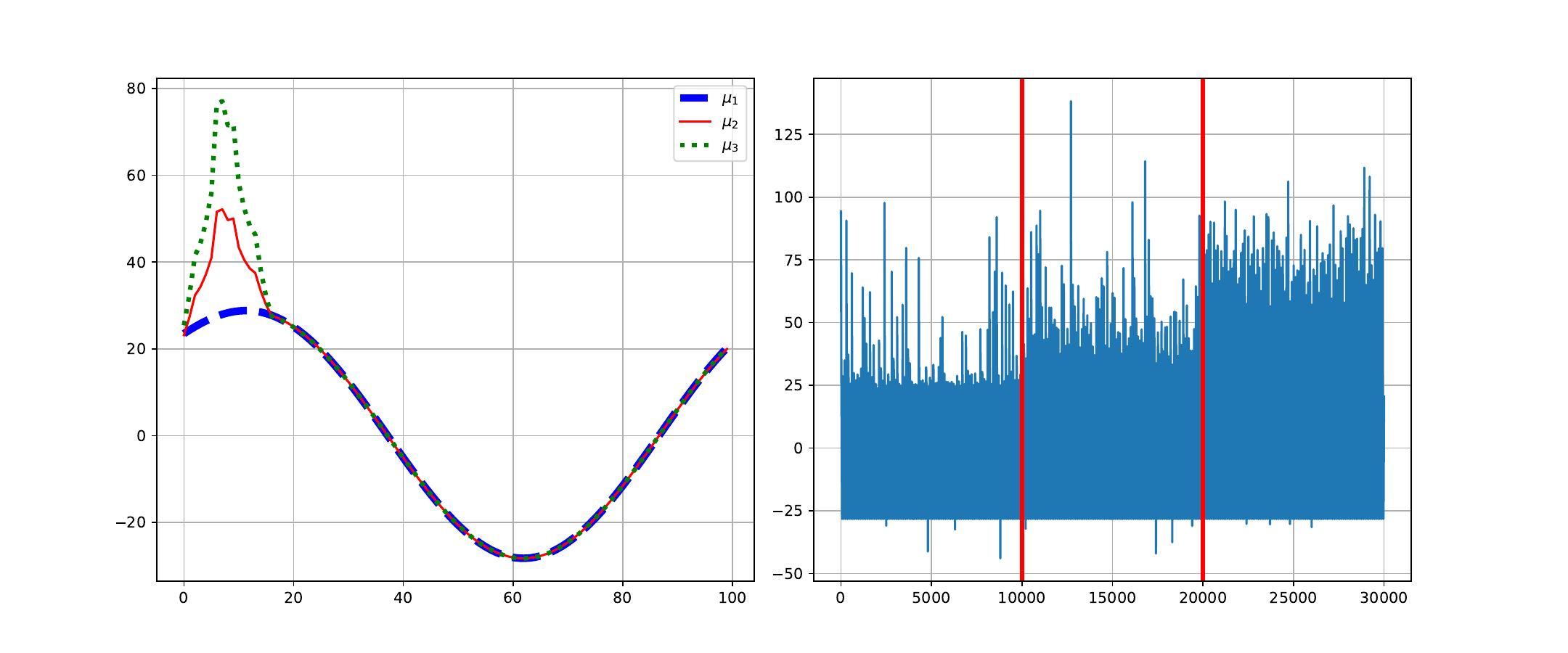}
    \includegraphics[scale= 0.4]{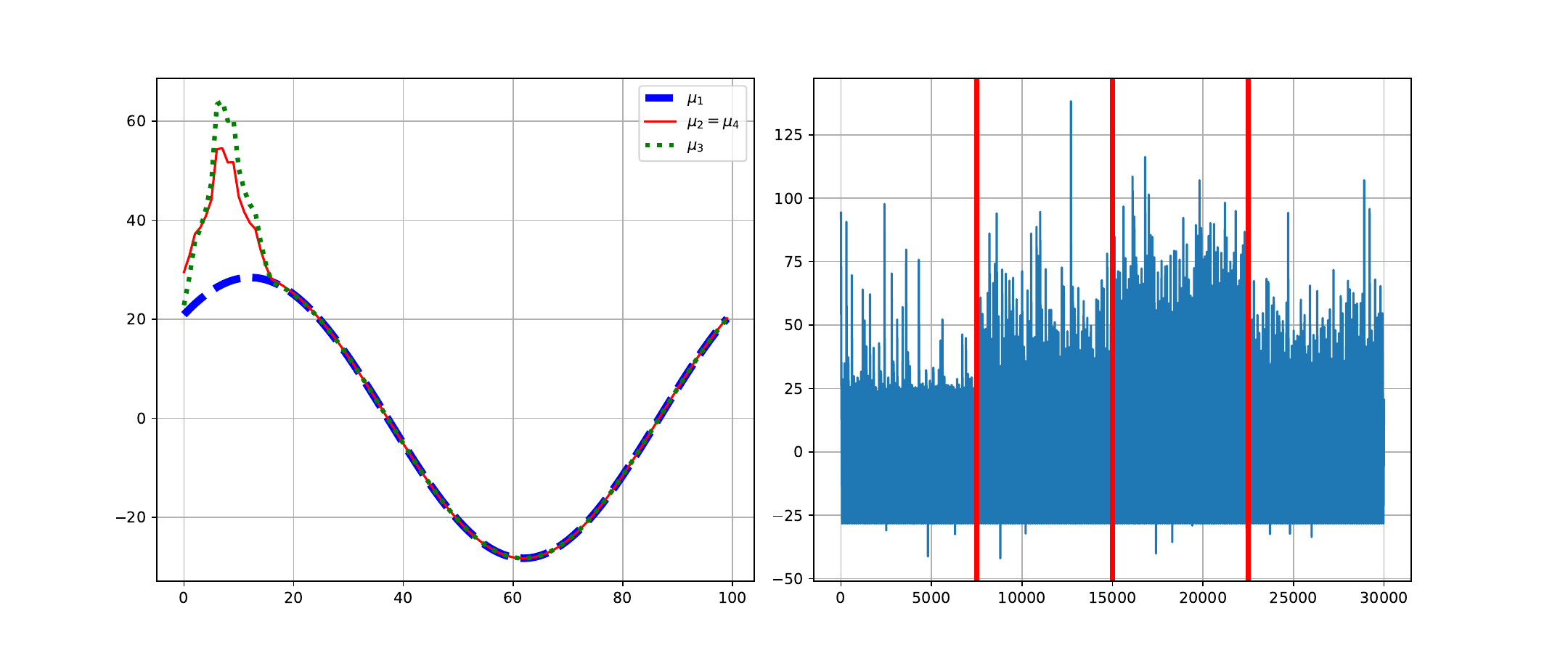}
    
    \caption{\it    
    Simulated data for  $n= 300$ observations with
    two changes (first row) and   three changes (second row).  Left panels:  mean curves on the different segments.   Right panels: Full time series data with change points given by vertical lines. 
 } 
    \label{fig:basuSimIntro}
\end{figure}

\subsection{Simulation study}
\label{sec31} 
For the simulation study we generate data  according to the  model in \eqref{eqn:model} 
with two and three change points. In the example with  two change locations, we choose 
$s_1= \floor{\frac{n}{3}} $,  $s_2= \floor{\frac{2n}{3}}$  
 and the mean functions are given by 
\begin{align}
    \mu_{j} (t)= \begin{cases}
        20 \big (\sin{ (2 \pi t) } + \cos (2 \pi t )  \big ) 
        \\
        20 \big (\sin{ (2 \pi t) } + \cos (2 \pi t )  \big ) + \Delta_J (t) \mathbbm{1}_{ [0, 0.16]}(t) 
        \\
        20 \big (\sin{ (2 \pi t) } + \cos (2 \pi t )  \big ) + 2\Delta_J (t)\mathbbm{1}_{[0, 0.16]} (t) 
    \end{cases}  ~~~~~~~j =1,2,3 , 
    \label{model_simulations}
\end{align}
where $\Delta_J \mathbbm{1}_{ [0, 0.16]}$ is a symmetric  cubic spline  on the interval $[0, 0.16]$  interpolating the points $(0.01,2), (0.02, 5), (0.03, 9), (0.04, 10), (0.05, 12), (0.06, 15), (0.07, 22) ,  (0.08,25) $ and their reflections with respect to the vertical line through the point $0.085$.  In the example with three change points we choose   $s_1= \floor{\frac{n}{4}}$, $s_2=\floor{\frac{2n}{4}}$  and $ s_3  =\floor{\frac{3n}{4}}  $ and the mean functions $\mu_1$,  $\mu_2$,  $\mu_3$ are given  by \eqref{model_simulations}, while  $\mu_4(t) =  20 \big (\sin{ (2 \pi t) } + \cos (2 \pi t )  \big ) + \Delta_J (t)\mathbbm{1}_{[0, 0.16]} (t)$.

In both examples   $(\epsilon_j)_{j \ge 1} $ is an fMA(1) process defined in the simulation section of  \citep{dette2020functional}. To be precise, for  the B-spline basis functions $\nu_1, \ldots, \nu_{21}$  we consider the linear space $H = \text{span}\{\nu_1, \ldots, \nu_{21}\}\subset C([0, 1] $ and define independent processes $\eta_1, \ldots, \eta_n \in H  $ by
\begin{equation}
\eta_j =  \sum_{i=1}^{21}
\mathbf{1}_{[-4,4]}(N_{i,j})\nu_i, \quad j = 1, \ldots, n, 
\nonumber 
\end{equation}
where $N_{1,j},  \ldots, N_{21,j}$ are independent, normally distributed random variables with $\mathbb{E} [ N_{i,j}]=0 $ and $\text{Var}(N_{i,j})  = \frac{1}{i^2}$ $(i = 1, \ldots, 21; j = 1, \ldots, n)$.  The fMA(1) process is finally given by $\epsilon_i = \eta_i + \Theta \eta_{i-1}$, where the operator $\Theta : H \to H$ (acting on a finite-dimensional space) is defined by $0.8 \, \Psi /  \sigma_1 (\Psi ) $, the elements in the position $(i,j)$ of the matrix $\Psi$ 
are normally distributed random variables with mean zero and standard deviation $1/(ij)$ and 
$ \sigma_1 (\Psi ) $ denotes the spectral norm of $\Psi$.

The simulated data is displayed  in  Figure \ref{fig:basuSimIntro},  where we  also show the mean curves on the different segments. The results of the simulations (with sample sizes $n = 200, 300, 600$) are displayed in  Figure  \ref{fig:3cp_histo} (two changes in the upper part,  three changes in the lower part), which contains   histograms of the estimated relevant change points obtained  from $1000$ repetitions.  As is evident from the clustering of the histograms, we recover the true location of the change points fairly accurately and as predicted by the asymptotic theory in Section \ref{sec4}, performance improves with increasing sample size.  

\begin{figure}[t]
    \centering
    \hspace*{-7pt}
    \includegraphics[scale =0.2912]{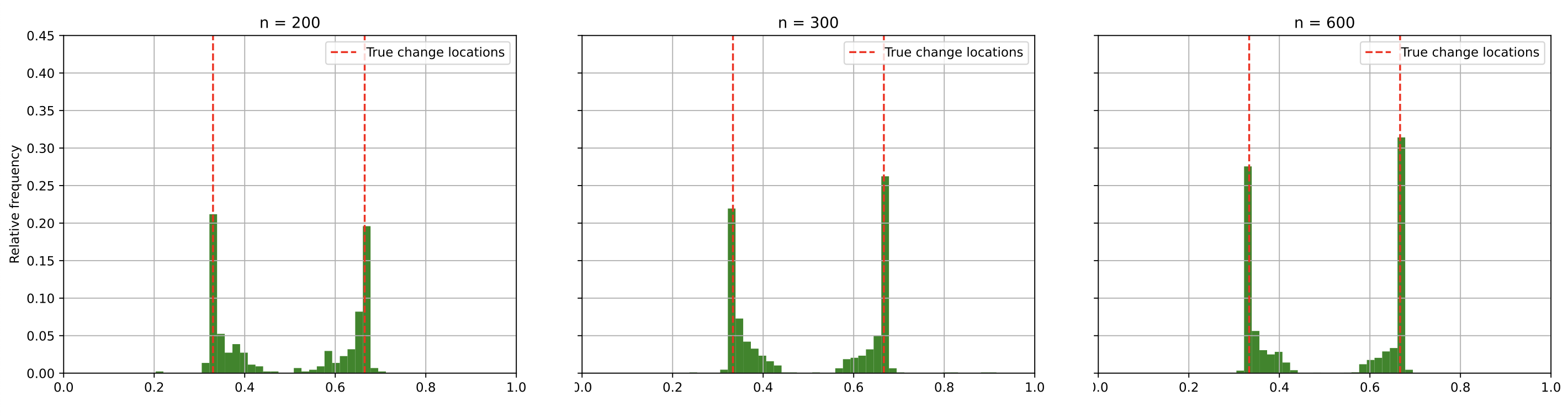}
    \hspace*{-7pt}
    \includegraphics[scale = 0.32]{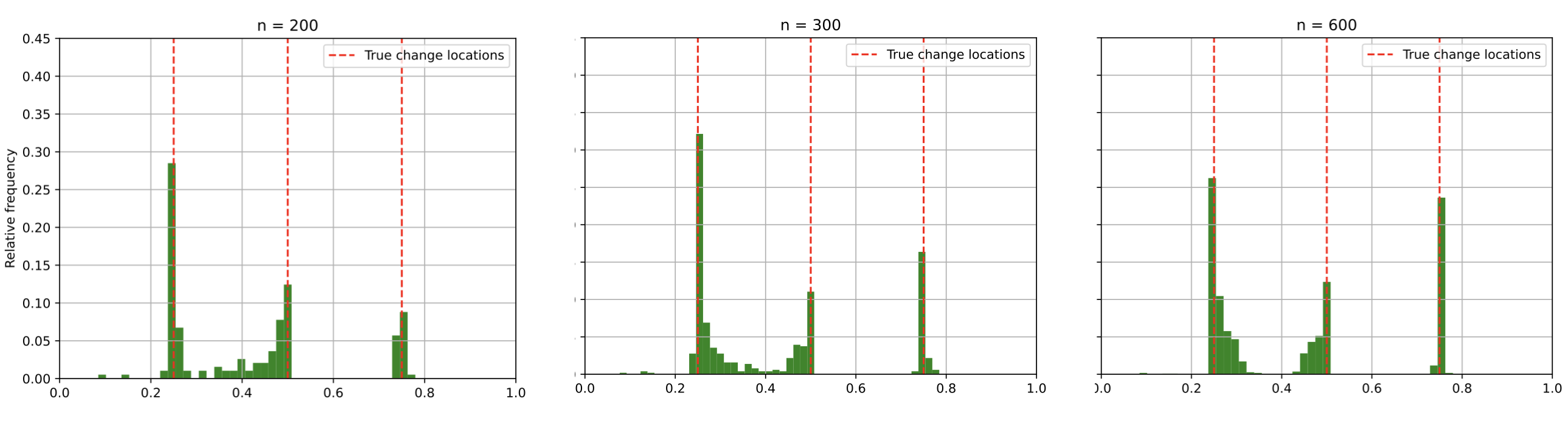}
    \caption{ \it Histograms of estimated relevant change points from $1000$ simulation with relative frequencies with respect to all changes recovered in Step 1.
    Upper part:  Two relevant changes. Lower part: Three relevant changes.}
    \label{fig:3cp_histo}
\end{figure}

\subsection{Data examples}
\label{sec5} 
  \def\theequation{5.\arabic{equation}}	
\setcounter{equation}{0}

In this section, we investigate two  data examples where the methodology developed  in this paper is directly applied for statistical  inference. 
One example refers to data  collected in the
laboratory, the other to data collected  during a  marathon run. Note that, hereafter the domain of each curve is the normalized time within each cycle of the data . A cycle in this regard means two subsequent contacts of the same foot with the ground. 

\subsubsection{Sensor data}
\label{sensor_data_section}
We investigate  biomechanical lower joint angle data from the knees from  a running athlete, which  was collected within the multidisciplinary project \textit{Sports, Data, and Interaction\footnote{\url{http://www.sports-data-interaction.com/} }} with the goal  of detecting persistent changes in the movement of the runner due to fatigue. Fatigue detection in running athletes is important for the elimination of problematic movements and reducing the risk of injuries. 
Similar analysis of the hip and ankle angle data is possible, even though this is not pursued further in this work.

The study subjects were recruited from a pool of frequent hobby runners between the ages of 21-60 years with experience ranging between 1- 14 years and  subjects reported running between 2-5 times per week. In this section, we  do not consider  professional runners. This is because we expect pronounced changes post-fatigue in the hobby runners while professional runners, being especially trained are able to endure higher fatigue levels and time constraints do not allow for recording such long datasets in the laboratory.

The data was collected by body-worn sensors via inertial measurement units (IMUs) produced by Xsens \citep[Xsens MVN link sensors, sampling at 240 Hz, see ][]{schepers2018xsens} while following a fatigue protocol, i.e. ensuring that an athlete was surely and steadily tired while minimising the impact of external factors on the data collection. The controlled laboratory setting minimizes the inevitable disturbances occurring during other data collection procedures like outdoors on a running track. Further, the treadmill controlled the speed of the runner and hence any deviations due to external factors other than fatigue were kept to a minimum. Unless otherwise stated, in  the following discussion we will look at the (right-) knee angle data from several runners on a treadmill.\\

For a proof of the validity of our  methodology we assume a decaying serial correlation structure   for the errors (see Assumption (A4) in Section \ref{sec4}). In order to verify that this is a reasonable assumption  for the data 
under consideration we estimate 
the lag $k$ autocorrelation 
$\gamma_k(t,s)=\text{Cor}(X_0(t),X_k(s))$   between  $X_0(t)$ and $X_k(s)$ by
$$
\hat \gamma_k(t,s) :=  \frac{\sum_{j=1}^{n-k} (X_j (t) - \bar X(t)) (X_{j+k} (s)- \bar X (s))}{\sqrt{\sum_{j=1}^{n-k} ( X_j (t) - \bar X(t))^2 \sum_{j= 1}^{n-k} (X_{j+k} (s) - \bar X (s))^2}}~.
$$
Exemplary, we display in Figure \ref{fig:variogram}   the 
aggregation by the $L^2$-norm 
$\|\hat \gamma\|_2$   of one runner for various lags. One can observe that the resulting function decays quite quickly but is substantially different from 0 for $k=1,2$ which validates our assumption. 
\begin{figure}[h]
    \centering
    \includegraphics[scale = 0.45]{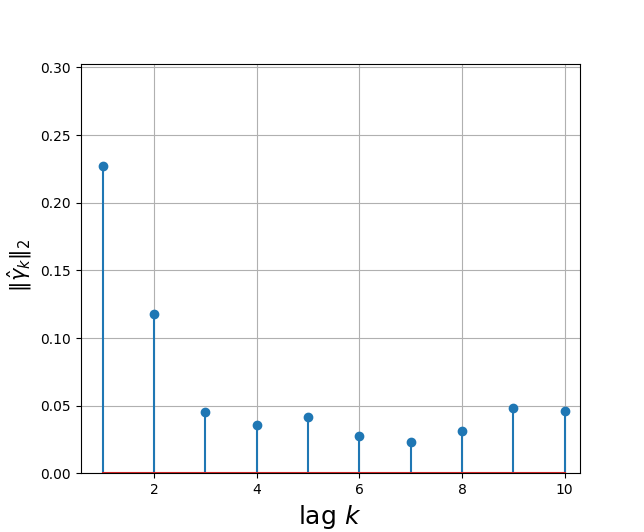}
    \caption{Variogram over lagged samples}
    \label{fig:variogram}
\end{figure}

In this type of application, the magnitude of adaptations of the biomechanical joints is an individual attribute of the respective runners (as seen in the examples in the following paragraphs). Consequently, the  
  choice of the threshold $\Delta$  has to be  individualised  for each runner. We address this problem  by estimating  mean curves from the initial $5\% $  and the  final $ 5\% $ of the
  curves and denote the corresponding estimates by   $\hat \mu_{\text{initial}}$ 
  and by  $ \hat \mu_{\text{final}}$. The threshold is then defined by   
 $\Delta = \norm{\hat \mu_{\text{initial}} - \hat \mu_{\text{final}} }_\infty /3 $
 as we expect the runners to go from rest phase to pre-fatigued and finally fatigued phases.

In Figure \ref{fig:app_fig2}, we visualize the results of the methodology to detect relevant changes (denoted by crosses) due  to fatigue  for  a particular runner. Notice that the densely collected functional data has no obvious jumps and fatigue detection for such a data requires us to use sophisticated change point detection techniques. For this particular runner with sample size $n = 1800$ the relevant difference threshold is $\Delta = 6$  and the first two  change points  detected by Algorithm \ref{basuAlg002} are relevant, implying a significant change (i.e., an increase) in the knee angles at least up to $6$ degrees. Comparing the mean curves from each consecutive segment in the bottom part of Figure \ref{fig:app_fig2}, we obtain that the largest change is at the peak knee flexion which  means more bending of the knee during the swing phase (when the corresponding foot is in the air). This is in agreement with prior studies, as in \cite{zandbergen2023effects}, who  also have reported this in their preliminary analysis of fatigue data. 
 This increase in the knee flexion is attributed to decreased levels of stiffness in the runner during a fatigue protocol and is understood to be an attempt by runners to decrease the moment of inertia about the hip joint and could be attributed to a protection mechanism of the runner. Interestingly enough the second detected relevant change (at about $40\%$ of the stride cycle) occurs when the corresponding foot makes contact with the ground and may be attributed to be the cause of higher oxygen-cost in the later stages of the run. 

\begin{figure}[h]
\hspace*{-60pt}
    \centering
    \includegraphics[scale = 0.45]{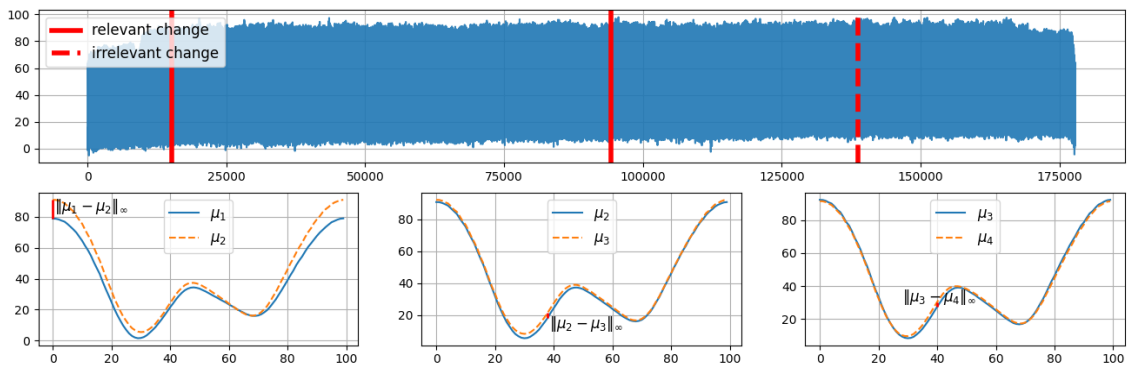}
    \caption{\it Biomechanical knee angle data for a single runner in indoor setting. Upper part: The full functional time series with crosses depicting that two detected changes are relevant. Lower part: Mean curves on the different detected segments and maximum deviations marked by (red) vertical lines. The sample size for this runner is $n= 1800$.} 
    \label{fig:app_fig2}
\end{figure}
 
 While the runner in Figure \ref{fig:app_fig2} shows an increased range of motion under fatigue conditions (an expected phenomenon) we display in Figure \ref{fig:alt_runner} the results for an alternative runner  , with sample size $n = 1800$. Here,  $\Delta = 5$  and Algorithm \ref{basuAlg001} identifies three relevant changes  along with a reduced range of motion.  At the first   change point, the difference is picked up at peak knee flexion (captured at $\norm{\mu_1- \mu_2}_ \infty$ in Figure \ref{fig:alt_runner}) during the stance phase corresponding to the phase of contact of the foot with the ground. This inevitably leads to the ground reaction forces causing an increased load on the knees. The second and third changes (of size  $\norm{\mu_2 - \mu_3}_\infty $ and $\norm{\mu_3- \mu_4}_\infty$ respectively) point towards a reduction in the knee extension and flexion respectively. This is an interesting, albeit concerning change  because it is contrary to observations for  most runners, where one expects reduced stiffness and hence increased range of motion, see \cite{apte2021biomechanical}. Such individual feedback on anomalies of the movement is valuable for future study of biomechanical data.

\begin{figure}[h]
\hspace*{-50pt}
\centering
\includegraphics[scale = 0.476]{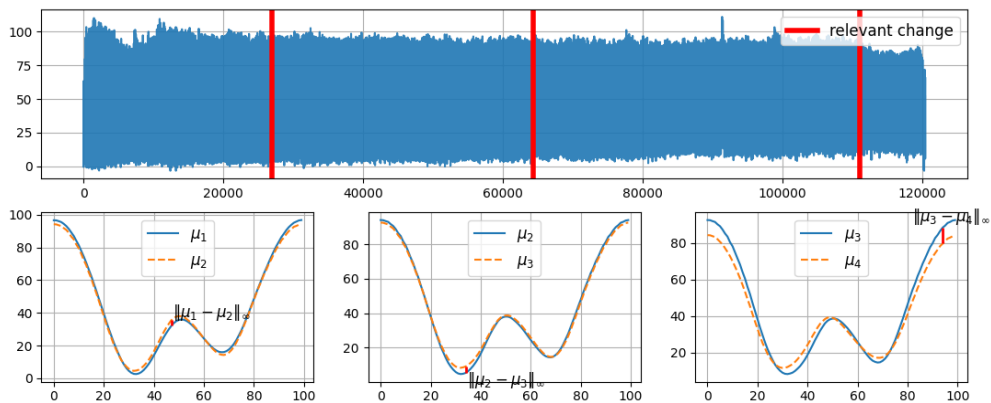}  
\caption{
\it 
Biomechanical knee angle data for an alternate runner in indoor setting showing interesting characteristics. Upper part: The full functional time series with crosses depicting that all detected changes are relevant. Lower part: Mean curves on the different detected segments and maximum deviations marked by (red) vertical lines. Sample size for this runner $n= 1205.$}
\label{fig:alt_runner}
\end{figure}

We conclude this section with a brief robustness study of Algorithm \ref{basuAlg001} with respect to
the choice of the  two tuning parameters, the block-length $L$ in  the bootstrap and the threshold $\xi_n$ in the estimators of 
the extremal sets. For this purpose 
we apply the  algorithm  with different choices of these parameters  to the Xsens data of a third runner. 
Here the  sample size is $n = 1400$ and is chosen as  $\Delta = 2.45$. 
Table \ref{tab:different_block_lengths} 
shows the results for different 
choices of the block length. We 
observe 
that the same change points are estimated for a wide range of the  block length provided that this is not chosen too large.   
Similarly, we see from  Table \ref{tab:diff_xi_n}  that Algorithm \ref{basuAlg001} is rather robust with respect to the choice of $\xi_n$ provided that it is not chosen too large  
(in that case binary segmentation fails to pick up some of the relevant change points) or too 
 small (in that case the power of the procedure is diminished as the effective sample size of the intervals $[\hat s_{i-1},\hat s_{i+1}]$ is reduced too much). 
 Nevertheless, there is still a wide window of possible choices that produces the same change points and even in the more extreme cases the change points that are deemed relevant are consistent with the more moderate scenarios.
\label{sec64}
\begin{table}[h]
    \centering
    \begin{tabular}{|c|c|c|c|c||c||c|c|c|}
    \hline
         & $\alpha = 0.1$& $L= 3$& $L = 4$ & $L= 5$ & $L = 7$ & $L= 9$& $L=12$ & $L = 15$ \\
         \hline
         & first change &  526& 526 & 526 &  526& 526 &526 & 526\\
         & second change & 1112& 1112 & 1112& 1112 & 1112 & 1112 & -\\
         & third change &  - & 1137 & - & - & - & - &- \\
         \hline\hline
         & $\alpha = 0.05$& $L= 3$& $L = 4$ & $L= 5$ & $L = 7$ & $L= 9$& $L=12$ & $L = 15$ \\
         \hline
         & first change &  526& 526 & 526 &  526& 526 &- & -\\
         & second change & 1112& 1112 & 1112& 1112 & 1112 & - & - \\
         & third change &  - & - & - & - & - & - &- \\
         \hline
    \end{tabular}
    \caption{Relevant changes 
    in the  knee angle data of single runner
    detected by Algorithm \ref{basuAlg001} with 
    different block-lengths. }
    \label{tab:different_block_lengths}
\end{table}
\begin{table}[h]
    \centering
\begin{tabular}{|c|c|c|}
\hline
     $\alpha = 0.1$ & \textbf{Binseg changes} & \textbf{Relevant changes} \\
          \hline 
     $\xi_n = 4.25$ & $[260, 526, 1112, 1137, 1163, 1185]$ & 526, 1112, 1137\\
     $\xi_n = 5.5$ & $[526, 1112, 1137, 1163, 1185]$ & 526, 1112, 1137\\  
     $\xi_n = 6.1465$ & $[526, 1112, 1137, 1163, 1185]$ & 526, 1112, 1137\\
     $\xi_n = 7$ &  $[526]$ & 526\\
       \hline \hline 
     $\alpha = 0.05$ & \textbf{Binseg changes} &\textbf{ Relevant changes} \\
          \hline 
     $\xi_n = 4.25$ & $[260, 526, 1112, 1137, 1163, 1185]$ & 1112\\
     $\xi_n = 5.5$ & $[526, 1112, 1137, 1163, 1185]$ & 526, 1112, 1137\\
     $\xi_n = 6.1465$ & $[526, 1112, 1137, 1163, 1185]$ & 526, 1112, 1137\\
     $\xi_n = 7$ &  $[526]$ & 526\\
     \hline     
\end{tabular}
\caption{Robustness of multiple change point estimators with respect to the choice of $\xi_n$ by Algorithm \ref{basuAlg001}. Left part: 
estimated change points by  binary segmentation (step 1). Right part: estimated relevant change points.}
\label{tab:diff_xi_n}
\end{table}

\subsubsection{Marathon data}\label{rbAS002}

As an example for a rather uncontrolled environment we consider  marathon data,  
which gives rise to deviations due to factors external from the runner's fatigue. On the other hand, runners in such data sets are typically far more experienced and changes and adaptations in their biomechanical data are fewer and are of a smaller size. Details on the data sets used in this section, can be found  in \cite{zandbergen2023quantifying}. 
We choose $\Delta$ just as before in the sensor data example in \cref{sensor_data_section}.
An illustration of our methodology for one of the runners is shown in Figure \ref{fig:marathon_data_fig}, where $\Delta = 3$. This runner, with a fairly large sample size of $n = 22000$, shows an initial relevant adjustment to (perceived-) fatigue in the marathon 
followed by an adjustment that is below the relevant size.
Finally towards the end of the run, another relevant change to the knee movement occurs.
We recover that both the relevant adjustments are in the flexion of the knee when the foot is in the air with an increase in the range of motion indicating reduced stiffness. Finally,  the irrelevant change still marks the tendency of the increase in flexion of the knee angle at this point (stance phase, foot is in contact with the ground)  during fatigue situation but is just not significant in this particular runner data and therefore not relevant for further analysis only in this case. Note that this is in contrast with some inexperienced runners where changes in the biomechanical data is also captured at this point of the stance phase; see Figure \ref{fig:alt_runner}, and \cite{maas2018novice} for a  further discussion on such increased kinematic changes in novice runners. This example  reiterates the usefulness of looking for relevant changes and not just at all changes in the data.

\begin{figure}
\hspace*{-55pt}
    \centering
\includegraphics[scale = 0.36]{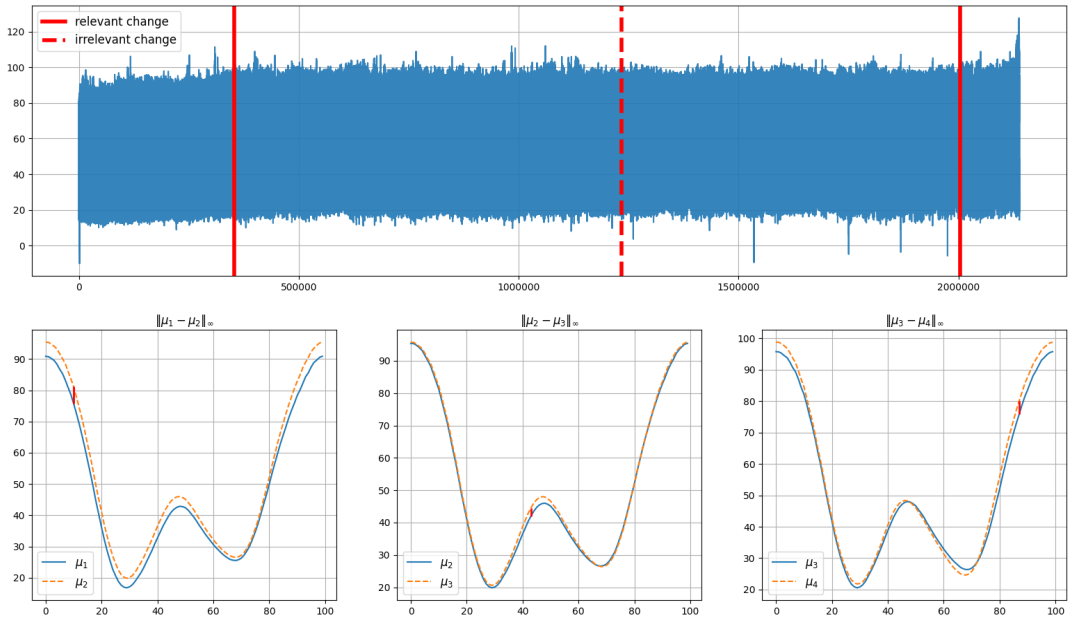}
\caption{ \it 
Biomechanical knee angle data of single marathon runner. Upper part: The full functional time series with two relevant changes (denoted by crosses) and one irrelevant change. Lower part: Mean curves on the different detected segments and maximum deviations marked by (red) vertical lines. The sample size for this runner  is $n= 22000$.
}
    \label{fig:marathon_data_fig}
\end{figure}
\bigskip

\section{Theoretical analysis}
\label{sec4} 
  \def\theequation{4.\arabic{equation}}	
\setcounter{equation}{0}

In this section we establish the validity of our approach.  For  the theoretical analysis, we recall the definition of the triangular array in \eqref{eqn:model}, use the notations $X_{n, j}$,
$\epsilon_{n, j}$, $\mu_{n, j}$ etc.  to emphasize the dependence on $n$ 
and make the following  assumptions.
  
\begin{itemize}
    \item [{\bf(A1)}] 
    The error process   $\{ \epsilon_{n, j} |  \, j = 1, \dots, n \}_ {n \in \mathbb{N}}$ 
   is a  triangular-array of centered 
    $C ( [0,1])$-valued random variables
     (i.e. for fixed $n$ the process $(\epsilon_{n,1},...,\epsilon_{n,n})$ is stationary and $\mathbb{E} [\epsilon_{n,j}]  = 0 $).
      \item [{\bf (A2)}] There exists a constant $K$ such that for all $j \in \N$ we have
    \begin{align}
    \nonumber %
        \E[\| {\epsilon_{n,j}} \|^{2+\nu}]\leq K, \qquad \E[\| {\epsilon_{n,j}}\|^J]<\infty
    \end{align}
    for some $\nu>0$ and some even integer $J\geq2$.
    \item [{\bf (A3)}] There exists some $\theta \in (0,1]$ with $\theta J>1$ and a non-negative real random variable $M$ with $\E[M^J]<\infty$ such that for any $n \in \N$ and $j=1,...,n$ the inequality
    \begin{align*}
        \vert X_{n,j}(t)-X_{n,j}(t')\vert \leq M |t-t'|^\theta 
    \end{align*}
    holds almost surely. Here $J$ is the same constant as in (A1). 
    \item [{\bf (A4)}] $(\epsilon_{n,j}, j \in \N)$ is $\phi$-mixing with mixing coefficients satisfying for some $\bar \tau \in (1/(2+2\nu),1/2)$ the condition
    \begin{align*}
        \sum_{k=1}^\infty k^{1/(1/2-\bar \tau)}\phi(k)^{1/2}<\infty , \qquad \sum_{k=1}^\infty (k+1)^{J/2-1}\phi(k)^{1/J}<\infty.
    \end{align*}
\end{itemize}

Our first result establishes the consistency of the estimator $\hat m$ of the number of change points and of 
the estimators $\hat s_1,...,\hat s_{\hat  m}$
of the location of the  change  points 
defined by Algorithm \ref{basuAlg002}.
The proof  is deferred to the appendix.

\begin{theorem}[consistency of  Algorithm \ref{basuAlg002})]
\label{consistent_num}
Suppose that Assumptions (A1), (A2) and (A4) hold 
and that 
\begin{align}
\nonumber 
    \frac{\log^{1/2}(n)}{\xi_n}+\frac{\xi_n}{\sqrt{n}}= o(1).
\end{align}
as $n \to \infty$. 
Then 
\begin{align}
\nonumber 
    \p \Big (
    \hat m=m, \max_{1 \leq i \leq m}\vert \hat s_i - s_i\vert \leq \log(n)/n
    \Big ) \rightarrow 1.  
\end{align}

\end{theorem}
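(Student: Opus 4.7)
The plan is to combine a uniform bound on the magnitude of the CUSUM process in segments that contain no change point with a sharp lower bound on the CUSUM at a true change point, and then to run an induction on the recursive calls of \textsc{BINSEG}.

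First, I would establish a maximal inequality
\begin{align*}
\sup_{0 \leq l < r \leq n} \sup_{l/n \leq s \leq r/n} \big\|\hat U_{l,r}(s,\cdot)\big\|_2 \leq C\sqrt{\log n}
\end{align*}
with probability tending to one, valid when the sub-sample $X_{l+1},\ldots,X_r$ contains no change point (so $\hat U_{l,r}$ has mean zero). Under (A1)--(A2) and the $\phi$-mixing decay in (A4), this follows from a functional invariance principle for the partial sum process $\sum_{j\leq \lfloor sn\rfloor}\epsilon_{n,j}$ in $L^2([0,1])$ together with a chaining/maximal inequality over the $O(n^2)$ many intervals $[l,r]$; the moment condition $\mathbb{E}\|\epsilon_{n,j}\|^{2+\nu}<\infty$ combined with the mixing rate yields the sub-Gaussian-type maximum of the right order. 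A union bound over $(l,r)$ inflates the bound only by a logarithmic factor, which is absorbed in the constant.

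Next, for segments containing at least one true change point I would show the matching lower bound: if $s_i \in [l/n,r/n]$, then
\begin{align*}
\big\|\mathbb{E}\hat U_{l,r}(s_i,\cdot)\big\|_2 \;\geq\; c\,\tfrac{(s_i-l/n)(r/n-s_i)}{(r-l)/n}\sqrt{r-l}\cdot \min_{1\leq j\leq m}\|\mu_{j+1}-\mu_j\|_2,
\end{align*}
which is of order $\sqrt{n}$ because the minimal spacing between consecutive change points is $\Theta(1)$ and $m$ is fixed. Combined with the stochastic bound of order $\sqrt{\log n}$, the condition $\log^{1/2}(n)/\xi_n + \xi_n/\sqrt n = o(1)$ guarantees that $\mathcal{U}>\xi_n$ whenever a true change point lies in the current segment, and $\mathcal{U}<\xi_n$ otherwise. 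This yields $\hat m = m$ with probability tending to one.

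For the localization, fix a call of \textsc{BINSEG} on $[l,r]$ containing a single true change point $s_i$ (after the recursion has already isolated it between two detected neighbours). Writing $\hat U_{l,r}(s,\cdot)=\mathbb{E}\hat U_{l,r}(s,\cdot)+R_n(s,\cdot)$, the deterministic part $s\mapsto \|\mathbb{E}\hat U_{l,r}(s,\cdot)\|_2^2$ is piecewise quadratic and strictly concave on each side of $s_i$ with slope of order $\sqrt{n}\cdot \|\mu_{i+1}-\mu_i\|_2^2$. A standard argmax-type inequality then gives $|\hat s_i - s_i| \lesssim \|R_n\|_\infty^2/n = O_P(\log(n)/n)$, using the uniform bound from the first step. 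Inducting on the recursion depth (which is bounded by $m$) and taking a union bound over the at most $m+1$ calls closes the argument.

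The main obstacle is the first step: obtaining the uniform-in-$(l,r,s)$ bound of order $\sqrt{\log n}$ for the $L^2$-norm of $\hat U_{l,r}$ under the $\phi$-mixing and moment structure, while keeping track of the triangular-array nature of $\{X_{n,j}\}$. Condition (A4), with its specific summability that is tuned to the parameter $\bar\tau$ in the invariance principle, is precisely what makes this feasible; the delicate point is that the same uniform bound must accommodate both the no-change-point case and the perturbation analysis used for localization in the single-change-point case.
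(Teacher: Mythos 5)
Your proposal reconstructs the consistency of binary segmentation from first principles, whereas the paper takes a much shorter route: it invokes Theorem~2.2 of \cite{rice2022consistency} and merely verifies its hypotheses. Concretely, the paper checks that the sup-norm separation of the segment means implies an $L^2$-separation (via the modulus of continuity, since the detector is the $L^2$-norm of the CUSUM), notes that boundedness and finiteness of $m$ are immediate, and reduces everything to a single Hájek--Rényi-type maximal inequality for the error process, $\p\big(\max_{1\le k\le n}\sqrt{k}\,\|\bar\epsilon_k\|_2>x\big)\le C\log(n)\,x^{-2}$, proved by a dyadic decomposition together with a moment inequality for $\phi$-mixing sequences. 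Your outline contains the same three conceptual ingredients (noise bound of order $\sqrt{\log n}$, signal of order $\sqrt n$ at a change, threshold condition on $\xi_n$ sandwiched between the two), so the overall architecture is sound, and a self-contained proof along your lines is in principle possible. What the paper's route buys is that all the delicate combinatorics of the recursion are outsourced to the cited theorem.

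Two steps of your sketch have genuine gaps as written. First, the uniform bound $\sup_{l<r}\sup_s\|\hat U_{l,r}(s,\cdot)\|_2\le C\sqrt{\log n}$ cannot be obtained by a union bound over the $O(n^2)$ pairs $(l,r)$: under (A2) you only have $2+\nu$ moments, so each individual tail decays polynomially in $x$, and $n^2$ such tails at level $x\asymp\sqrt{\log n}$ do not sum to $o(1)$. You must instead exploit that every $\hat U_{l,r}$ is a linear combination of partial sums anchored at the origin, and control $\max_k\sqrt{k}\,\|\bar\epsilon_k\|_2$ once --- exactly the reduction the paper's Lemma performs; the resulting bound is $C\log(n)x^{-2}$, which suffices only because the theorem assumes $\xi_n/\sqrt{\log n}\to\infty$, not because the maximum is sub-Gaussian. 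Second, your localization argument treats only segments already containing a single change point, but in the early calls of \textsc{BINSEG} the current segment contains several change points, and you must show that the argmax of the CUSUM in such a segment still lands within $O(\log n)$ of \emph{some} true change point (otherwise the recursion can misfire and $\hat m$ need not equal $m$). This multiple-changes-per-segment analysis is the genuinely delicate part of any direct proof of binary segmentation consistency and is precisely what the cited Theorem~2.2 supplies; your induction on recursion depth presupposes it rather than proving it.
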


\begin{theorem} \label{thm3}
    Suppose  that assumptions (A1) - (A4) hold with $\nu\geq2$ in (A1). If additionally  $L=n^\beta$ with $\beta \in [1/5,2/7]$ such that 
 \begin{align*}
    (\beta(2+\nu)+1)/(2+2\nu)<\bar \tau < 1/2
\end{align*}
in Assumption (A4) we obtain that the estimator of the set of relevant change  points is consistent  in the following sense. 
    \begin{itemize}
    \item [(i)] If  $S_{\rm rel}=\emptyset$, we have  
    \begin{align*}
        \liminf_{n \to \infty}  \p(\hat S_{\rm rel } =\emptyset) \geq 1-\alpha . 
    \end{align*}
  Moreover, if additionally 
  $ \max_{1 \leq i \leq m}\| {\mu_{i+1}-\mu_{i}} \|_\infty <\Delta $ 
  this can be strengthened to  
    \begin{align}
    \label{p7}
    \lim_{n \to \infty}    \p(\hat S_{\rm rel }=\emptyset) = 1
    \end{align}
    \item [(ii)] 
We have 
    \begin{align} \label{d10a}
      \lim_{n \to \infty}   \p\big (S_{\rm rel } \subset(  \hat S_{\rm rel }+ \log (n)/n)\big)=  1
    \end{align}
    as well as 
    \begin{align}   
    \label{d10b}
    \liminf_{n \to \infty}  \p\big (\hat S_{\rm rel} \subset (S_{\rm rel }+\log(n)/n)\big) \geq 1-\alpha.
    \end{align}
        The latter also can be strengthened to 
    \begin{align}   
    \label{hd10c}
   \lim_{n \to \infty}   \p\big (\hat S_{\rm rel} \subset (S_{\rm rel}+\log (n)/n)\big ) = 1
    \end{align}
    whenever 
$ \max_{1 \leq i \leq m, s_i \notin S_{\rm rel} }\|{\mu_{i+1}-\mu_i}\|_\infty <\Delta$.
\end{itemize}
\end{theorem}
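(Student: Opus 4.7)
The plan is to reduce the analysis to the high-probability event from Theorem \ref{consistent_num} on which Step 1 correctly recovers the configuration, and then combine a local Gaussian limit for the CUSUM detector with a bootstrap consistency argument and a continuous-mapping step over the extremal sets. Concretely, define $\mathcal A_n := \{\hat m = m,\,\max_i |\hat s_i - s_i|\leq \log(n)/n\}$; by Theorem \ref{consistent_num}, $\p(\mathcal A_n)\to 1$, and on $\mathcal A_n$ the interval $[\hat s_{i-1},\hat s_{i+1}]$ contains precisely the true change point $s_i$ and $\hat h_i\to h_i$ at rate $\log(n)/n$.

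First I would establish a functional central limit theorem for the centered process $\sqrt{\hat n_i}(\hat U_{\hat k_{i-1},\hat k_{i+1}}(s,t)-\E[\hat U_{\hat k_{i-1},\hat k_{i+1}}(s,t)])$ in $C([\hat s_{i-1},\hat s_{i+1}]\times[0,1])$ under Assumptions (A1)--(A4), using a $\phi$-mixing invariance principle for Banach-space-valued partial sums together with a chaining argument based on the H\"older control (A3). Combined with the deterministic approximation $\E[\hat U_{\hat k_{i-1},\hat k_{i+1}}(s,t)]=(h_i(s\wedge s_i)-h_i(s)h_i(s_i))(\mu_i(t)-\mu_{i+1}(t))+o(1)$ and a localization of $\hat M_{n,i}$ to a neighborhood of the extremal sets $\mathcal E_i^{\pm}$, this yields the weak convergence $\hat D_{n,i}\overset{\mathcal D}{\to} D(\mathcal E_i)$ asserted in \eqref{d9}.

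Next I would prove bootstrap consistency. Standard results for the moving-block multiplier bootstrap under $\phi$-mixing, with block length $L=n^\beta$ in the prescribed range and $\nu\geq 2$ in (A2), imply that conditionally on the data the process $\hat B_{i,n}^{(r)}$ converges weakly to the same Gaussian limit as the centered CUSUM. The centering $\hat Y_j = X_j - (\hat\mu_{2,\hat k_i}-\hat\mu_{1,\hat k_i})\mathbf 1_{\{j>\lfloor \hat s_i n\rfloor\}}$ removes the jump at $\hat s_i$ up to a uniform $O_\p(\sqrt{\log(n)/n})$ error, so that no mean drift leaks into the bootstrap process. Combined with the sup-norm consistency $\|\hat\mu_{1,\hat k_i}-\hat\mu_{2,\hat k_i}-(\mu_i-\mu_{i+1})\|_\infty = O_\p(\sqrt{\log(n)/n})$ --- which together with the $(c\log n)/\sqrt n$ tolerance in \eqref{extremal_sets} yields $\p(\mathcal E_i^\pm\subset \hat{\mathcal E}_i^\pm)\to 1$ and forces $\hat{\mathcal E}_i^\pm$ to lie in an asymptotically vanishing neighborhood of $\mathcal E_i^\pm$ --- this gives $q^*_{1-\alpha}\to q^\infty_{1-\alpha}$ in probability, where $q^\infty_{1-\alpha}$ is the $(1-\alpha)$-quantile of $D(\mathcal E)=\max_i D(\mathcal E_i)$. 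The conclusions then follow routinely: for part (i), Remark \ref{rem1a} gives $\hat T_n\leq \hat D_n$ with probability tending to $1$ when all $\|\mu_{i+1}-\mu_i\|_\infty\leq\Delta$, so $\limsup_n \p(\hat S_{\rm rel}\neq\emptyset)\leq \limsup_n \p(\hat D_n>q^*_{1-\alpha})=\alpha$; under the strict inequality $\hat T_{n,i}$ carries a negative drift of order $\sqrt n(\|\mu_{i+1}-\mu_i\|_\infty-\Delta)h_i(s_i)(1-h_i(s_i))\to-\infty$, yielding \eqref{p7}. For part (ii), if $s_i\in S_{\rm rel}$ then $\hat T_{n,i}$ carries a positive drift of order $\sqrt n$, so $\hat s_i\in \hat S_{\rm rel}$ eventually and $|\hat s_i-s_i|\leq\log(n)/n$ gives \eqref{d10a}; restricting the bound $\hat T_{n,i}\leq \hat D_{n,i}$ to indices with $s_i\notin S_{\rm rel}$ yields \eqref{d10b} and, under the strict inequality, \eqref{hd10c}.

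The main obstacle will be the Banach-space bootstrap consistency uniformly in $(s,t)\in [\hat s_{i-1},\hat s_{i+1}]\times[0,1]$ combined with the continuous-mapping step over the random extremal sets $\hat{\mathcal E}_i^\pm$. The multiplier block bootstrap for $C([0,1])$-valued $\phi$-mixing arrays under only low-order moments requires a careful tightness and chaining argument exploiting (A3), and transferring bootstrap weak convergence to weak convergence of the suprema over data-dependent sets demands a quantitative Hausdorff-type control of $\hat{\mathcal E}_i^\pm$ around $\mathcal E_i^\pm$ that is sharper than one might naively expect, particularly because the extremal sets may have Lebesgue measure zero.
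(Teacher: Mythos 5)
Your proposal follows essentially the same route as the paper's proof: reduction to the high-probability event of Theorem \ref{consistent_num}, a functional CLT for the CUSUM process, the delta method for the sup-norm localized at the extremal sets (the paper invokes Hadamard directional differentiability), multiplier block bootstrap consistency together with Hausdorff consistency of $\hat{\mathcal{E}}_i^{\pm}$, and the drift arguments separating relevant from non-relevant changes. The only slight imprecision is your claim that $q^*_{1-\alpha}$ converges in probability to the limiting quantile, which fails for fixed $R$; the paper instead proves joint weak convergence of $(\hat D_n,\hat T_n^{(1)},\dots,\hat T_n^{(R)})$ to independent copies of $D(\mathcal{E})$, which is the statement that actually delivers the level guarantee.
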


\begin{remark} \label{rem2} ~~~

{\rm 
\begin{itemize}
    \item[(a)] From a theoretical point of view  our results can be extended to the case, where  the mean functions  on the different segments vary with $n$, that is  $\mu_j=\mu_j^{(n)}$  ($j=1, \ldots  , m$). In this case Theorem \ref{consistent_num} and \ref{thm3}  remain valid under the additional  assumptions:
    \begin{align*}
   &  \min\limits_{n \in \N}\min\limits_{ 1 \leq j \leq m}\| {\mu_j^{(n)}-\mu_{j-1}^{(n)}}\|_\infty \geq \eta>0 ~,    \\
   &  \max\limits_{n \in \N}\max\limits_{0 \leq j \leq m}\|{\mu_j^{(n)}}\|_\infty < \infty ~,  \\
 &     \sqrt{n}\big(\min_{1 \leq i \leq m, {s_i \in S_{\rm rel}}}\| {\mu_{i+1}^{(n)}-\mu_{i}^{(n)}}\|_\infty -\Delta\big)\rightarrow \infty~.
    \end{align*}
    For the additional statements \eqref{p7} and \eqref{hd10c} in
    Theorem \ref{consistent_num} and \ref{thm3}   we further require
    \begin{align*}
     &   \max\limits_{n \in \N}\max_{1 \leq i \leq m}\| {\mu_{i+1}-\mu_{i}} \|_\infty <\Delta~, \\
     &    \max\limits_{n \in \N}\max_{1 \leq i \leq m, s_i \notin S_{\rm rel} }\|{\mu_{i+1}-\mu_i}\|_\infty <\Delta~,
    \end{align*}
    respectively.
       \item[(b)] All results presented in this also hold for sufficiently densely observed functional data.  In this case additional assumptions on the smoothness of the 
    trajectories positively influences the performance of our method as there is a trade off between how densely the data is observed and how well we can interpolate to obtain estimates of the functions.
       \item[(c)]  It is also possible to consider a sequence of nominal levels, say  
    $\alpha=\alpha_n$, in Theorem \ref{thm3}, such that  $\alpha_n$ that converges to $0$ at a polynomial rate. In this case we have instead of \eqref{d10a} and \eqref{d10b}
    $$ 
      \lim_{n \to \infty}   \p\big ( S_{\rm rel}  = \hat S_{\rm rel } \big)=  1 ,f
    $$
    whenever  
    $$
    \frac{\sqrt{n}}{\log(n)}\bigg (\min_{1 \leq i \leq m, s_i \in S_{\rm rel}}\| {\mu_{i+1}^{(n)}-\mu_{i}^{(n)}}\|_\infty -\Delta\bigg )\rightarrow \infty.
    $$
    This  observation is  a consequence of the fact that the statistic 
    $D(\mathcal{E})$ in \eqref{d9} 
    is bounded by  
    $\max_{1 \leq i \leq m} \|{\mathbb{W}_i} \|_\infty $ where the random variables $\mathbb{W}_i$ have the same distribution as the process $\mathbb{W}$ in\eqref{d9}. By Fernique's Theorem   \citep[see][]{Fernique1975} each  random variable $\|{\mathbb{W}_i}\|_\infty$ has subgaussian tails. This   implies at most logarithmic growth of the quantiles of $D(\mathcal{E})$ in $1/\alpha$.
   \item[(d)] We note that the localization rates obtained in Theorem \ref{consistent_num} are minimax optimal up to a logarithmic factor (see the discussion in section 1.4 of \cite{Verzelen2023}). 
    \end{itemize}
    }
\end{remark}

\textbf{Ethics statement: } The  ethics committee at the university of Twente., (Ethical Committee EEMCS (EC-CIS), University of Twente, ref.: RP 2022-20) approved the experimental protocol of this study.

\textbf{Additional resources :} Data may be made available on reasonable request. The source code used in this paper is available on github \href{https://github.com/rupsabasu2020/multiple_relevant_CP}{rupsabasu2020/multiple\_relevant\_CP}.

\textbf{Acknowledgments}
This work  was partially supported by the  
Deutsche Forschungsgemeinschaft: 
 DFG Research unit 5381 {\it Mathematical Statistics in the Information Age}, project number 460867398 and by the  project titled: Modeling functional time series with dynamic factor structures and points of impact",  with project number 511905296. We thank the team of \textit{Sports, Data, and Interaction\footnote{\url{http://www.sports-data-interaction.com}}},  in particular Robbert van Middelaar and Aswin Balasubramaniam for meticulously collecting, pre-processing and providing the data used in this work and Bram Kohlen for helping refine the implementation of \cref{basuAlg002} on python. 

\bibliographystyle{apalike}
\bibliography{cppBib}       

\newpage 

\section{Appendix}
\label{sec6} 
  \def\theequation{6.\arabic{equation}}	
\setcounter{equation}{0}

\section{Supplement}
In this  appendix we represent the  proofs of the theoretical results in Section \ref{sec4}. Recall that 
 $\hat{S} = \{\hat{s}_1, \dots, \hat{s}_{m}\}$ is the set of  estimated change points. 
Throughout  this section $\|{\cdot} \|_2$ 
 always refers to the $L^2$-norm.

\subsection{Proof of Theorem \ref{consistent_num}  (consistency of BINSEG)} \label{sec61}

 The proof follows from Theorem 2.2 in \cite{rice2022consistency}. To apply this result we 
need to verify the Assumption 2 - 5 of this reference. We begin with Assumption 3.
\smallskip

Let $\omega_{j}$ denote  the  modulus of continuity of  the function $\mu_{j}-\mu_{j-1}$ and note that $|\mu_{j}(t)-\mu_{j-1}(t')|\leq \omega_{j}(|t-t'|)$. 
As $\omega_{j}(0)=0$,  $\omega_{j}$ is continuous, increasing and
$\mu_{j}-\mu_{j-1} \not = 0 $, 
there exists a positive constant  $\delta_{j}>0$ such that $\omega_{j}(\delta_{j})=\norm{\mu_{j}-\mu_{j-1}}_\infty/2$. Thus we can find an interval $I \subset [0,1]$ with length at least $\delta_{j}$ such that   $|\mu_{j}(t)-\mu_{j-1}(t')|>\norm{\mu_{j}-\mu_{j-1}}_\infty/2$ for all $t,t' \in I$. As  there are only finitely many of these differences it follows that 
\begin{align*}
    \min_{1 \leq j \leq m }\norm{\mu_{j}-\mu_{j-1}}_2 \geq \eta  := \min_{1 \leq j \leq m}\norm{\mu_{j} -\mu_{j-1}}_\infty \cdot \min_{1 \leq j \leq m}\delta_{j}  ~.
\end{align*}
Therefore, Assumption 3 in \cite{rice2022consistency} is satisfied.
Assumption 4 is immediate as  $m$ is fixed. Assumption 5 follows from the fact that $\norm{\mu_j}_\infty<\infty$ in combination with  $\| {\cdot}\|_2 \leq \|{\cdot}\|_\infty$. Finally,  Assumption 2 is a consequence of the following  Lemma.

\begin{lemma}
\label{pl1}
Assume that $(\epsilon_{i})_{i\in \mathbb{N}} $ is a stationary sequence with $\E[\| {\epsilon_i}\|_2]\leq K$ such that
$
    \sum_{n=1}^\infty \phi(n)^{1/2} < \infty .
$
Then we obtain for the mean $\Bar{\epsilon}_k={1 \over k} \sum_{i=1}^k\epsilon_i$.
    \begin{align*}
        \p \big (\max_{1 \leq k \leq n} \sqrt{k}\| {\Bar{\epsilon}_k}\|_2 > x \Big ) \leq C \log(n)x^{-2}
    \end{align*}
    where $C$ depends only on $K$ and $(\phi(n))_{n \in \N}$.
\end{lemma}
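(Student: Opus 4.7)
The goal is to combine three ingredients: (i) a second-moment bound on the partial sum $S_k := \sum_{i=1}^k \epsilon_i$ in $L^2([0,1])$, (ii) a Doob-type maximal inequality for stationary $\phi$-mixing sequences, and (iii) a dyadic blocking argument that absorbs the normalisation by $1/\sqrt{k}$.

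First I would expand the errors in an orthonormal basis $(e_j)_{j \geq 1}$ of $L^2([0,1])$ and write $\epsilon_i^{(j)} := \langle \epsilon_i, e_j \rangle$. Each coordinate process $(\epsilon_i^{(j)})_{i}$ is centred, stationary and $\phi$-mixing with coefficients bounded by $\phi(n)$, since $\sigma(\epsilon_i^{(j)}) \subset \sigma(\epsilon_i)$. The classical covariance bound for $\phi$-mixing sequences gives
\begin{align*}
    |\E[\epsilon_0^{(j)} \epsilon_n^{(j)}]| \leq 2 \phi(n)^{1/2}\, \E[(\epsilon_0^{(j)})^2],
\end{align*}
and expanding $\E[(S_k^{(j)})^2]$ together with the summability of $\phi(n)^{1/2}$ yields $\E[(S_k^{(j)})^2] \leq C_1\, k\, \E[(\epsilon_0^{(j)})^2]$. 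Summing over $j$ via Parseval produces the Hilbert-space bound $\E \| S_k \|_2^2 \leq C_1 K k$.

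Next I would upgrade this to a Doob-type maximal bound. For real-valued centred stationary $\phi$-mixing sequences satisfying $\sum_n \phi(n)^{1/2} < \infty$, the standard Peligrad--Shao maximal inequality gives $\E\max_{1 \leq k \leq N}(S_k^{(j)})^2 \leq C_2\, N\, \E(\epsilon_0^{(j)})^2$. Applying this coordinate-wise and invoking Fubini,
\begin{align*}
    \E\max_{1 \leq k \leq N}\|S_k\|_2^2 \leq \sum_j \E\max_{k \leq N}(S_k^{(j)})^2 \leq C_2 N \sum_j \E(\epsilon_0^{(j)})^2 = C_2 N\, \E\|\epsilon_0\|_2^2 \leq C_2 K N.
\end{align*}
A dyadic blocking then converts this into the desired normalised tail bound. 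Setting $M := \lceil \log_2 n \rceil$, one has $\sqrt{k} \geq 2^{(j-1)/2}$ on the block $2^{j-1} < k \leq 2^j$, so by Markov's inequality and the maximal bound,
\begin{align*}
    \p\Big (\max_{1 \leq k \leq n}\sqrt{k}\,\|\bar\epsilon_k\|_2 > x\Big ) &\leq \sum_{j=0}^M \p\Big (\max_{k \leq 2^j}\|S_k\|_2^2 > x^2\cdot 2^{j-1}\Big )\\
    &\leq \sum_{j=0}^M \frac{C_2 K \cdot 2^j}{x^2 \cdot 2^{j-1}} = \frac{2 C_2 K (M+1)}{x^2},
\end{align*}
which is $O(\log n /x^2)$ with a constant depending only on $K$ and $(\phi(n))_{n \in \N}$.

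The main obstacle is obtaining the sharp $O(N)$ maximal bound in step (iii). The direct covariance bound plugged into Moricz's inequality only yields a $(\log N)^2$ factor, and tracing this through the dyadic argument produces a $\log^3 n / x^2$ tail, strictly weaker than the target $\log n / x^2$. The $\phi$-mixing literature under the summability $\sum_n \phi(n)^{1/2} < \infty$ does supply the sharp real-valued maximal inequality; the Hilbert-space extension via the coordinate argument is then routine.
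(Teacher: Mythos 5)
Your proof is correct and follows essentially the same route as the paper's: a dyadic blocking argument plus Markov's inequality reduces the normalised maximum to an order-$N$ bound on $\E\big[\max_{1\leq k\leq N}\|\sum_{i=1}^k\epsilon_i\|_2^2\big]$, which is then supplied by a maximal second-moment inequality for stationary $\phi$-mixing sequences under $\sum_n\phi(n)^{1/2}<\infty$. The only difference is cosmetic: you obtain the Hilbert-space maximal bound coordinate-wise via an orthonormal expansion, Parseval and a real-valued Peligrad--Shao/Shao-type inequality, whereas the paper invokes Theorem 2.1 of Wang Xuejun et al.\ (2009) directly; both are appeals to the same strand of the mixing literature and yield the same $C\log(n)x^{-2}$ bound.
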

\begin{proof}
    Using similar arguments as in the proof of Lemma B.1 in 
    the online supplement to \cite{Aue2017} we obtain
    \begin{align*}
         \p \Big (\max_{1 \leq k \leq n} \sqrt{k}\norm{\Bar{\epsilon_k}}_2 > x \Big ) \leq \sum_{j=1}^{c\log(n)}2^{-(j-1)}x^{-2}\E\Big [\Big (\max_{1 \leq k \leq 2^j} \norm{\sum_{i=1}^k\epsilon_i}_2\Big)^2\Big]
    \end{align*}
    (note that this estimate does not depend on the dependence structure of the random variables).
    Observe that $(\epsilon_i)_{i\in \mathbb{N}} $  $\phi$-mixing implies that $( \| \epsilon_i \|_2)_{i\in \mathbb{N}} $  is $\phi$-mixing with mixing coefficients at most as large as those of $(\epsilon_i)_{i\in \mathbb{N}} $. In particular we may use Theorem 2.1 from \cite{Xuejun2009} to obtain
    \begin{align*}
        \E\Big [\Big (\max_{1 \leq k \leq 2^j}\norm{\sum_{i=1}^k\epsilon_i}_2\Big )^2\Big]\leq 2^jK_0 ,
    \end{align*}
    where $K_0$ depends only on $K$ and $(\phi(n))_{n \in \N}$, this then yields
    \begin{align*}
         \p \Big (\max_{1 \leq k \leq n} \sqrt{k}\| \Bar{\epsilon_k}\|_2> x \Big ) \leq 2cK_0\log(n)x^{-2}~. 
    \end{align*}
\end{proof}

\subsection{Some preliminary steps for the proof of  Theorem \ref{thm3}} \label{sec62} 

In this section we will prove a preliminary result regarding a test for the hypothesis 
that  there exists at least one  \textit{ structural break of relevant size} in the sequence of data $X_{n,1},X_{n,2} , \ldots , X_{n,n}$, i.e  we consider the hypotheses
\begin{align} 
    H_0: \max_{1 \leq k \leq m}\norm{\mu_{k}-\mu_{k-1}}_\infty \leq \Delta \quad \text{vs. }  H_1: \max_{1 \leq k \leq m}\norm{\mu_{k}-\mu_{k-1}}_\infty  > \Delta .
    \label{mult_null_hyp}
\end{align} 
In fact it turns out that the decision rule 
\begin{align}
\nonumber 
    \hat T_n>q^*_{1-\alpha}
\end{align}
defines a  consistent and asymptotic level $\alpha$ test. 
Besides of being of interest on its own, this 
 result is the basis for establishing  the consistency of  Algorithm \ref{basuAlg001} with respect to the estimation of the relevant change points
(see Theorem \ref{thm3} below).
\begin{theorem}
\label{thm2}
Let $q^*_{1-\alpha}$ be the $(1-\alpha)$-quantile obtained in the course of Algorithm \ref{basuAlg001}, and suppose  that assumptions (A1) - (A4)  hold with $\nu\geq2$ in (A1). If additionally 
 $L=n^\beta$ with $\beta \in [1/5,2/7]$ such that 
 \begin{align*}
    (\beta(2+\nu)+1)/(2+2\nu)<\bar \tau < 1/2
\end{align*}
in Assumption (A4), the following statements hold true. 
\begin{itemize}
    \item [(i)] Under the null hypothesis of no relevant change point we have  
    \begin{align*}
         \limsup_{n \to \infty}\p(\hat T_n>q^*_{1-\alpha})\leq \alpha~,
    \end{align*}
    with equality whenever $\| {\mu_{i}-\mu_{i-1}} \|_\infty =\Delta$ for all $i=1, \ldots , m$.
    \item [(ii)] Whenever  $\left(\max_{1 \leq i \leq m}\norm{\mu_{i}-\mu_{i-1}}-\Delta\right)>0$ we have
    \begin{align*}
        \lim_{n \to \infty}\p(\hat T_n>q_{1-\alpha}^{*})=1.
    \end{align*}
\end{itemize}
\end{theorem}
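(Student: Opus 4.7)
The plan is to reduce both parts to the statements about $\hat D_n$ that are indicated in Remark \ref{rem1a}, after first invoking Theorem \ref{consistent_num} to control the estimated change-point locations, and then to justify that the bootstrap statistic $\hat T_n^{*,(r)}$ delivers a consistent estimate of the quantiles of $D(\mathcal{E})$.

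\textbf{Step 1 (Reduce to the good event).} First I would pass to the event $\mathcal{A}_n=\{\hat m=m,\, \max_i |\hat s_i-s_i|\le \log(n)/n\}$, which by Theorem \ref{consistent_num} has probability tending to $1$ (the assumption on $\xi_n$ is implicit from (A1)--(A4) together with $L=n^\beta$). On $\mathcal{A}_n$ every interval $[\hat s_{i-1},\hat s_{i+1}]$ contains exactly the single true change point $s_i$, the transformation $\hat h_i$ is uniformly close to $h_i$, and the segment means $\hat\mu_{1,\hat k_i},\hat\mu_{2,\hat k_i}$ are $\sqrt{n}$-consistent estimators of $\mu_i,\mu_{i+1}$ in sup-norm. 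In particular, one shows $\|\hat\mu_{1,\hat k_i}-\hat\mu_{2,\hat k_i}-(\mu_i-\mu_{i+1})\|_\infty=O_P(\log(n)/\sqrt{n})$, which with the choice $c$ in \eqref{extremal_sets} gives (up to a shrinking neighborhood) $\hat{\mathcal{E}}_i^{\pm}\supseteq \mathcal{E}_i^{\pm}$ and $\hat{\mathcal{E}}_i^{\pm}$ eventually contained in any fixed neighborhood of $\mathcal{E}_i^{\pm}$.

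\textbf{Step 2 (Deterministic + stochastic decomposition of $\hat T_n$).} Plugging $X_j=\mu_j+\epsilon_j$ into \eqref{hd1} and using the expansion $\E[\hat U_{\hat k_{i-1},\hat k_{i+1}}(s,t)]\simeq (h_i(s\wedge s_i)-h_i(s)h_i(s_i))(\mu_i(t)-\mu_{i+1}(t))$ announced in Section \ref{sec22}, I would write
\begin{align*}
\hat U_{\hat k_{i-1},\hat k_{i+1}}(s,t) = A_{n,i}(s,t)\,(\mu_i(t)-\mu_{i+1}(t))+V_{n,i}(s,t)+o_P(n^{-1/2}),
\end{align*}
where $A_{n,i}(s,t)=h_i(s\wedge s_i)-h_i(s)h_i(s_i)$ and $V_{n,i}$ is the CUSUM of the centered errors on $[\hat k_{i-1},\hat k_{i+1}]$. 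The maximum of $|A_{n,i}(s,\cdot)|$ over $s$ is attained at $s=s_i$ and equals $h_i(s_i)(1-h_i(s_i))$. Under $H_0$ the triangle inequality then yields
\begin{align*}
\hat M_{n,i} \le h_i(s_i)(1-h_i(s_i))\|\mu_{i+1}-\mu_i\|_\infty + \sup_s\|V_{n,i}(s,\cdot)\|_\infty/\sqrt{\hat n_i}+o_P(n^{-1/2}),
\end{align*}
and a matching lower bound holds at $s=s_i$ restricted to the extremal set. After rescaling by $\sqrt{\hat n_i}$ and subtracting $\hat h_i(\hat s_i)(1-\hat h_i(\hat s_i))\Delta$, one obtains $\hat T_{n,i}\le \hat D_{n,i}+o_P(1)$ with equality whenever $\|\mu_{i+1}-\mu_i\|_\infty=\Delta$, which is precisely the claim sketched in Remark \ref{rem1a}.

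\textbf{Step 3 (Weak convergence and bootstrap consistency).} The main technical work, and the step I expect to be the real obstacle, is to establish jointly that (a) $(\hat D_{n,1},\ldots,\hat D_{n,m})\overset{\mathcal{D}}{\to}(D(\mathcal{E}_1),\ldots,D(\mathcal{E}_m))$ via an FCLT in $C([0,1]\times[0,1])$ for $\phi$-mixing Banach-space valued sequences under (A1)--(A4) with $\nu\ge 2$, together with the consistency of $\hat{\mathcal{E}}_i^{\pm}$ from Step 1, and (b) conditionally on the data, $\hat T_n^{*,(r)}$ converges weakly to $D(\mathcal{E})$ in probability. For (b) one checks that the block-multiplier process $\hat B_{i,n}^{(r)}$ reproduces, via the $\xi_k^{(r)}$, the long-run covariance $\sum_j \text{Cov}(\epsilon_0(t),\epsilon_j(t'))$ and that subtracting the linear interpolation $\hat h_i(s)\hat B_{i,n}^{(r)}(\hat s_{i+1},t)$ yields the Brownian-bridge part $\mathbb{W}(h_i(s_i),\cdot)$. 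The tricky point is that the $\hat Y_j$ are centered using the estimated pre/post-break means around $\hat k_i$; on $\mathcal{A}_n$ this centering differs from the true one by $O_P(\log(n)/\sqrt{n})$ uniformly, and the block-length restriction $\beta\in[1/5,2/7]$ together with $(\beta(2+\nu)+1)/(2+2\nu)<\bar\tau<1/2$ is exactly what makes the bias term $L/\hat n_i$ negligible while keeping the variance approximation error under control. Combining (a) and (b) with the convergence of the Hausdorff neighborhoods of $\hat{\mathcal{E}}_i^{\pm}$, a continuous-mapping argument yields $q^*_{1-\alpha}\to q_{1-\alpha}(D(\mathcal{E}))$ in probability.

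\textbf{Step 4 (Concluding (i) and (ii)).} For (i), combining Steps 2 and 3 gives
\begin{align*}
\limsup_n\p(\hat T_n>q^*_{1-\alpha})\le \limsup_n\p(\hat D_n>q^*_{1-\alpha}-o_P(1)) = \p(D(\mathcal{E})> q_{1-\alpha}(D(\mathcal{E})))=\alpha,
\end{align*}
and when $\|\mu_{i+1}-\mu_i\|_\infty=\Delta$ for every $i$ the inequality in Step 2 is an equality, so $\hat T_n$ and $\hat D_n$ share the same limit and the bound is attained. For (ii), if $\|\mu_{i^*+1}-\mu_{i^*}\|_\infty>\Delta$ for some $i^*$, Step 2 gives $\hat M_{n,i^*}\to h_{i^*}(s_{i^*})(1-h_{i^*}(s_{i^*}))\|\mu_{i^*+1}-\mu_{i^*}\|_\infty$ in probability, so
\begin{align*}
\hat T_n \ge \hat T_{n,i^*}=\sqrt{\hat n_{i^*}}\bigl(\hat M_{n,i^*}-\hat h_{i^*}(\hat s_{i^*})(1-\hat h_{i^*}(\hat s_{i^*}))\Delta\bigr)\xrightarrow{P}+\infty,
\end{align*}
while $q^*_{1-\alpha}=O_P(1)$ by Step 3 (with subgaussianity of $\|\mathbb{W}\|_\infty$ via Fernique's theorem as noted in Remark \ref{rem2}(c)), so $\p(\hat T_n>q^*_{1-\alpha})\to 1$.
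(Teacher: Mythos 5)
Your plan is correct and mirrors the paper's own proof, which likewise splits the argument into the algebraic bound $\hat T_n \le \hat D_n$ under $H_0$ (with equality when every jump has sup-norm exactly $\Delta$), the weak convergence $\hat D_n \overset{\mathcal{D}}{\rightarrow} D(\mathcal{E})$ together with tightness of $\hat D_{n,i_0}$ for the divergence under the alternative (Theorems \ref{thm4} and \ref{thm5}), and the bootstrap step (Theorem \ref{thm6}). The only differences are formal rather than substantive: the paper derives the limit of $\hat D_{n,i}$ via the delta method for the directionally Hadamard differentiable sup-norm functional (whose derivative is governed by the \emph{true} extremal sets $\mathcal{E}_i^{\pm}$ — the estimated sets $\hat{\mathcal{E}}_i^{\pm}$ enter only through the bootstrap, not through the convergence of $\hat D_{n,i}$ as your Step 3(a) suggests), and it formalizes bootstrap validity as joint unconditional weak convergence of $(\hat D_n,\hat T_n^{(1)},\dots,\hat T_n^{(R)})$ to independent copies of $D(\mathcal{E})$ for fixed $R$ rather than as convergence in probability of $q^*_{1-\alpha}$ to the population quantile.
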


The proof of Theorem \ref{thm2} consists of three steps (which will be proved below). We first derive the asymptotic distribution of the statistic $\hat D_n$ defined in \eqref{d9}.

\begin{theorem}[weak convergence of $\hat D_n$]
\label{thm4}
 Under the assumptions of Theroem \ref{thm2} we have
$ \hat D_n \overset{\mathcal{D}}{\rightarrow} D(\mathcal{E}), $ where $\hat D_n$ and $D(\mathcal{E})$ are defined in \eqref{d9}.
\end{theorem}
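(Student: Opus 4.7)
The plan is to establish weak convergence of $\hat D_n$ to $D(\mathcal{E})$ via three ingredients: (i) the consistency of Algorithm \ref{basuAlg002} (Theorem \ref{consistent_num}) to reduce estimated change points to the true ones up to a $\log(n)/n$-error; (ii) a functional invariance principle for the sequential CUSUM processes on each segment $[s_{i-1}, s_{i+1}] \times [0,1]$; and (iii) the functional delta method for the sup-norm, exploiting the explicit form of the extremal sets $\mathcal{E}_i^\pm$.

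First, I would work on the ``good event'' furnished by Theorem \ref{consistent_num}, on which $\hat m = m$ and $|\hat s_i - s_i| \leq \log(n)/n$ for all $i$. On this event, the interval $[\hat k_{i-1}, \hat k_{i+1}]$ differs from $[k_{i-1}, k_{i+1}]$ by $O(\log n)$ summands in the CUSUM process \eqref{hd1}. Since each summand is $O_\p(1)$ by (A2), this swap contributes only $O_\p(\log(n)/\sqrt{n}) = o_\p(1)$ to the $\sqrt{\hat n_i}$-rescaled process, and $\hat h_i(\hat s_i) - h_i(s_i) = o_\p(1)$. Thus every random index can be replaced by its deterministic counterpart at no asymptotic cost. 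The key structural observation, already noted in Remark \ref{rem1a}, is that the deterministic mean part $f_{0,i}(s,t) := g_i(s)(\mu_i(t) - \mu_{i+1}(t))$ with $g_i(s) = h_i(s\wedge s_i) - h_i(s)h_i(s_i)$ attains its sup-norm exactly at $\{s_i\} \times (\mathcal{E}_i^+ \cup \mathcal{E}_i^-)$ with value $h_i(s_i)(1-h_i(s_i)) \| \mu_i - \mu_{i+1} \|_\infty$, which coincides with the centering used in $\hat D_{n,i}$.

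The core step is to establish the joint weak convergence
\begin{align*}
    \bigl( \sqrt{n_i} \bigl( \hat U_{k_{i-1}, k_{i+1}} - \E[\hat U_{k_{i-1}, k_{i+1}}] \bigr) \bigr)_{i=1}^m \;\Rightarrow\; (\mathbb{U}_i)_{i=1}^m
\end{align*}
in $\prod_{i=1}^m C([s_{i-1}, s_{i+1}] \times [0,1])$, where each $\mathbb{U}_i$ is a centered Gaussian field with $\mathbb{U}_i(s_i, \cdot) \stackrel{d}{=} \mathbb{W}(h_i(s_i), \cdot)$ for the process $\mathbb{W}$ in \eqref{d9}. This reduces to a functional CLT for the partial-sum process built from the $\phi$-mixing $C([0,1])$-valued sequence $(\epsilon_{n,j})$. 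Finite-dimensional convergence follows from (A4) via a standard CLT for $\phi$-mixing triangular arrays; tightness in the $s$-coordinate follows from moment bounds of the type used in Lemma \ref{pl1} (via Theorem 2.1 of \cite{Xuejun2009}); tightness in the $t$-coordinate is furnished by the uniform Hölder bound (A3) together with the moment condition (A2) through a standard chaining argument.

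Finally, the sup-norm functional $\Phi(f) = \|f\|_\infty$ on $C([s_{i-1}, s_{i+1}] \times [0,1])$ is Hadamard directionally differentiable at any nonzero $f_0$ with derivative
\begin{align*}
    d\Phi_{f_0}[g] = \max\Bigl\{ \sup_{(s,t) \in \mathcal{E}^+_{f_0}} g(s,t), \; \sup_{(s,t) \in \mathcal{E}^-_{f_0}} (-g(s,t)) \Bigr\},
\end{align*}
where $\mathcal{E}^\pm_{f_0}$ is the set of maximizers of $\pm f_0$. Applying this to $f_0 = f_{0,i}$ (for which $\mathcal{E}^\pm_{f_{0,i}} = \{s_i\} \times \mathcal{E}_i^\pm$) together with the functional delta method and the invariance principle above yields
\begin{align*}
    \hat D_{n,i} \;\Rightarrow\; \max\Bigl\{ \sup_{t \in \mathcal{E}_i^+} \mathbb{U}_i(s_i, t), \; \sup_{t \in \mathcal{E}_i^-} (-\mathbb{U}_i(s_i, t)) \Bigr\} \stackrel{d}{=} D(\mathcal{E}_i),
\end{align*}
using that $-\mathbb{W} \stackrel{d}{=} \mathbb{W}$. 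Passing to the joint limit and invoking the continuous mapping theorem for the max produces $\hat D_n \overset{\mathcal{D}}{\rightarrow} D(\mathcal{E})$. The main obstacle is the tightness of the two-parameter $C$-valued partial-sum process under $\phi$-mixing: the moment bound (A2), the Hölder modulus (A3), and the mixing-rate restriction (A4) must be balanced carefully to obtain a simultaneous modulus of continuity in both $s$ and $t$; a secondary subtlety is verifying measurability of the supremum of the Gaussian limit over the possibly uncountable extremal sets $\mathcal{E}_i^\pm$ so that the Hadamard delta method is rigorously applicable.
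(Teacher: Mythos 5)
Your proposal follows essentially the same route as the paper: replace the estimated change points by the true ones at cost $o_\p(1)$ after rescaling (the paper's Lemma \ref{pl2}), invoke a joint functional invariance principle for the segment-wise CUSUM processes (the paper's Theorem \ref{thm7}, which delegates the FCLT to Theorem 4.1 of \cite{dette2020functional} rather than re-deriving tightness from (A2)--(A4)), and then apply the Hadamard-directional delta method for the sup-norm (the paper cites Corollary 2.3 of \cite{carcamo2020directional}) followed by the continuous mapping theorem for the maximum over $i$. The argument is correct, and your explicit identification of the extremal set of the drift as $\{s_i\}\times(\mathcal{E}_i^+\cup\mathcal{E}_i^-)$ is exactly the structural fact the paper uses implicitly when the delta method produces the centering $h_i(s_i)(1-h_i(s_i))\|\mu_{i+1}-\mu_i\|_\infty$.
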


Next  we derive a (consistent) and asymptotic level $\alpha$ for the hypotheses  \eqref{mult_null_hyp} of no relevant change point using the statistic $\hat T_n$ defined  in  \eqref{basuT00} and the $(1-\alpha)$-quantile of the random variable $D(\mathcal{E})$ in \eqref{d9} (note that this test is not feasible as the quantile depends in a complicated manner on the extremal sets and the dependence structure of the process).

\begin{theorem}
\label{thm5}
Let $q_{1-\alpha}$ denote  the $(1-\alpha)$ quantile of $D(\mathcal{E})$. Under the Assumptions of Theorem \ref{thm2} we have  
\begin{itemize}
    \item [(i)] Under the null hypothesis  it holds  that
    $
        \limsup_{n \to \infty }\p(\hat T_n>q_{1-\alpha})\leq \alpha
    $
    with equality whenever $\norm{\mu_{i}-\mu_{i-1}}_\infty=\Delta$ for all $i=1, \ldots , m $.
    \item [(ii)] 
    $
        \lim_{n\to \infty} \p(\hat T_n>q_{1-\alpha})=1
    $ whenever  $\left(\max_{1 \leq i \leq m}\norm{\mu_{i}-\mu_{i-1}}_\infty-\Delta\right)>0$.
\end{itemize}
\end{theorem}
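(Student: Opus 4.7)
The plan is to combine Theorem \ref{thm4}, which gives the weak convergence $\hat D_n \overset{\mathcal{D}}{\to} D(\mathcal{E})$, with a stochastic comparison between $\hat T_n$ and $\hat D_n$ under the null, together with a divergence argument for the power statement under the alternative.

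\textbf{Part (i).} The starting point is the deterministic inequality, already flagged in Remark \ref{rem1a}, that $\hat T_n \leq \hat D_n$ under the null. Indeed, for each $i$,
\[
\hat T_{n,i} - \hat D_{n,i} = \sqrt{\hat n_i}\,\hat h_i(\hat s_i)(1-\hat h_i(\hat s_i))\bigl(\|\mu_{i+1}-\mu_i\|_\infty - \Delta\bigr).
\]
Under $H_0$ the factor in parentheses is nonpositive while $\hat h_i(\hat s_i)(1-\hat h_i(\hat s_i))$ is nonnegative (and in fact bounded away from zero with probability tending to one, by Theorem \ref{consistent_num}). Taking a maximum over $i$ yields $\hat T_n \leq \hat D_n$ almost surely. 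Hence
\[
\limsup_{n \to \infty}\p(\hat T_n > q_{1-\alpha}) \leq \limsup_{n \to \infty}\p(\hat D_n > q_{1-\alpha}) = \alpha
\]
by Theorem \ref{thm4}. In the boundary case $\|\mu_{i+1}-\mu_i\|_\infty = \Delta$ for every $i$, the previous inequality is an equality, so $\hat T_n = \hat D_n$ and the probability converges to exactly $\alpha$, provided the distribution function of $D(\mathcal{E})$ is continuous at $q_{1-\alpha}$. This last point I would establish using Fernique's theorem (as invoked in Remark \ref{rem2}(c)) together with the representation of $D(\mathcal{E})$ as a supremum of the nondegenerate Gaussian field $\mathbb{W}$ on the extremal sets.

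\textbf{Part (ii).} Suppose $\|\mu_{i_0+1}-\mu_{i_0}\|_\infty > \Delta$ for some $i_0$. By Theorem \ref{consistent_num}, on an event of probability tending to one we have $\hat m = m$ and $|\hat s_i - s_i| \leq \log(n)/n$. A law-of-large-numbers argument for the CUSUM process $\hat U_{\hat k_{i_0-1},\hat k_{i_0+1}}$ (which the mixing and moment assumptions (A1)--(A4) make routine) gives
\[
\hat M_{n,i_0} \overset{\p}{\to} h_{i_0}(s_{i_0})\bigl(1-h_{i_0}(s_{i_0})\bigr)\,\|\mu_{i_0+1}-\mu_{i_0}\|_\infty,
\]
which strictly exceeds $h_{i_0}(s_{i_0})(1-h_{i_0}(s_{i_0}))\Delta$. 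Consequently $\hat T_{n,i_0}$ diverges to $+\infty$ at rate $\sqrt{n}$, and since $\hat T_n \geq \hat T_{n,i_0}$ we conclude $\p(\hat T_n > q_{1-\alpha}) \to 1$.

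The main obstacle is securing the continuity of the distribution function of $D(\mathcal{E})$ at $q_{1-\alpha}$, which is what upgrades the $\limsup$ to a genuine limit in the boundary case; without it, one only recovers the weaker inequality. The stochastic domination $\hat T_n \leq \hat D_n$ is immediate from the definitions, and the divergence under the alternative is standard once Theorem \ref{consistent_num} delivers consistent segmentation.
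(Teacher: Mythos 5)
Your proof is correct and follows essentially the same route as the paper: part (i) via the domination $\hat T_n \leq \hat D_n$ under the null combined with the weak convergence of $\hat D_n$ from Theorem \ref{thm4}, and part (ii) via divergence of $\hat T_{n,i_0}$ at the index of largest jump (the paper phrases this through tightness of $D_{n,i_0}$ plus an additive term diverging at rate $\sqrt{\hat n_{i_0}}$, rather than through a law of large numbers for $\hat M_{n,i_0}$, but the two arguments are equivalent). Your explicit attention to the continuity of the distribution function of $D(\mathcal{E})$ at $q_{1-\alpha}$ in the boundary case is a detail the paper leaves implicit.
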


Finally we show that the multiplier bootstrap proposed in Algorithm \ref{basuAlg001} is produces asymptotically the same distribution as the statistic $\hat D_n$ in Theorem \ref{thm4}. For this purpose we show the following result.

 \begin{theorem} \label{thm6} 
     Under Assumptions of Theorem \eqref{thm2} we have
     \begin{align}
     \nonumber %
         (\hat D_n, \hat T_n^{(1)}, \dots, \hat T_n^{(R)}) \overset{\mathcal{D}}{\rightarrow} (D (\mathcal{E}), D^{(1)} (\mathcal{E}), \dots, D^{(R)} (\mathcal{E}) )~,
     \end{align}
     where $D (\mathcal{E}), D^{(1)} (\mathcal{E}), \dots, D^{(R)} (\mathcal{E}) $  are independent copies of $D (\mathcal{E})$. 
 \end{theorem}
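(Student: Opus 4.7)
The plan is to establish joint weak convergence in three stages: reduce all statistics to oracle versions built from the true parameters, show conditional Gaussian convergence of the multiplier bootstrap processes, then upgrade to unconditional joint convergence with asymptotic independence across the bootstrap replicates.

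For the oracle reduction, I work on the event $\{\hat m = m\} \cap \{\max_i |\hat s_i - s_i| \leq \log(n)/n\}$, whose probability tends to one by Theorem \ref{consistent_num}, and replace every appearance of $\hat k_i, \hat h_i, \hat s_i, \hat{\mathcal{E}}_i^\pm$ in both $\hat D_n$ and $\hat T_n^{(r)}$ by the corresponding true objects $k_i, h_i, s_i, \mathcal{E}_i^\pm$. The at most $O(\log n)$ mismatched summands in each CUSUM contribute at most $\log^{1/2}(n)/\sqrt{n}$ in sup-norm, which is negligible after the $\sqrt{\hat n_i}$ rescaling. Consistency of $\hat{\mathcal{E}}_i^\pm$ for $\mathcal{E}_i^\pm$ follows from the rate $\|\hat\mu_{1,\hat k_i} - \mu_i\|_\infty + \|\hat\mu_{2,\hat k_i} - \mu_{i+1}\|_\infty = O_\p(n^{-1/2}\log^{1/2} n)$ combined with the truncation $c\log n/\sqrt n$ in \eqref{extremal_sets}, and together with tightness of the bootstrap processes this yields that suprema over $\hat{\mathcal{E}}_i^\pm$ and over $\mathcal{E}_i^\pm$ agree up to $o_\p(1)$. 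After this reduction it suffices to treat oracle statistics $\tilde D_n$ and $\tilde T_n^{(1)}, \ldots, \tilde T_n^{(R)}$.

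For the bootstrap step, each oracle process $\tilde W_{i,n}^{(r)}$ is, conditionally on the data, a centered Gaussian process whose covariance kernel is the usual moving-block long-run covariance estimator based on correctly centered residuals. Under (A1)--(A4) with $\nu \geq 2$ and the rate $L = n^\beta$ with $\beta \in [1/5, 2/7]$ satisfying $(\beta(2+\nu)+1)/(2+2\nu) < \bar\tau$, this kernel converges uniformly in probability to the long-run covariance of $\mathbb{W}$; this is the standard functional multiplier block bootstrap consistency, for which the stated mixing and moment conditions are tailored. Since the multiplier families $(\xi_k^{(r)})_k$ are mutually independent and independent of the data, the $R$ bootstrap processes are conditionally independent, yielding jointly $(\tilde W_{i,n}^{(r)})_{r \leq R} \Rightarrow (\mathbb{W}_i^{(r)})_{r \leq R}$ with the limits i.i.d.\ copies of $\mathbb{W}$. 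Because the conditional limit law is deterministic, this upgrades to an unconditional statement, and the vector of bootstrap statistics is asymptotically independent of $\tilde D_n$, which is a measurable functional of the data alone. Combining this with the marginal limit of $\tilde D_n$ from Theorem \ref{thm4} and applying the continuous mapping theorem to the sup functionals over the fixed sets $\mathcal{E}_i^\pm$ and to the max over $i$ and $r$ yields the claimed joint convergence.

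The hard part will be the uniform consistency of the conditional bootstrap covariance kernel when the residuals $\hat Y_j$ are centered by the estimated segment means rather than the true ones. One must propagate the $O_\p(n^{-1/2}\log^{1/2} n)$ error of the sample means through the block sums uniformly in $(s,t)$, and then combine this with a functional CLT for $\phi$-mixing sequences in $C([0,1])$. The narrow admissible window $\beta \in [1/5, 2/7]$ for the block length precisely balances the bias from residual serial dependence against the variance of finite-block averaging, so the constraints on $\bar\tau$ and $\nu$ are used in an essential way here; they are also what allows the symmetry argument identifying the limit of $\sup_{t \in \mathcal{E}_i^-}(-\tilde W_{i,n}^{(r)})$ with $\sup_{t \in \mathcal{E}_i^-}\mathbb{W}$ in distribution, matching the representation of $D(\mathcal{E}_i)$ in \eqref{d9}.
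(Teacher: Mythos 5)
Your proposal follows essentially the same route as the paper's proof: reduce to oracle statistics built from the true change points (the paper's Lemma \ref{pl2}), establish joint weak convergence of the data CUSUM process together with the $R$ multiplier processes to independent Gaussian limits (the paper's Lemma \ref{thm8}, which packages your conditional-Gaussianity and covariance-consistency argument as a direct joint finite-dimensional-plus-tightness statement, citing the single-change-point bootstrap of \cite{dette2020functional}), prove Hausdorff consistency of $\hat{\mathcal{E}}_i^{\pm}$ for $\mathcal{E}_i^{\pm}$, and conclude by the continuous mapping argument for the suprema over the estimated extremal sets. The only differences are presentational (conditional-then-unconditional upgrade versus joint convergence of the $(R+1)$-column array, and a slightly garbled but inessential rate claim for the $O(\log n)$ mismatched summands), so the approach matches the paper's.
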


 The assertion of Theorem \ref{thm2} now follows combining Theorem \ref{thm4} - \ref{thm6}.

\subsubsection{Proof of Theorem \ref{thm4}}
\label{sec621}

 Throughout this section the symbol 
$\rightsquigarrow$
denotes weak convergence  in  the space  of
bounded functions on a compact rectangular region, which will always be clear from the context. We now have the following result. 
\begin{theorem}\label{thm7}
\begin{align*}
    \left\{\sqrt{n_i}\left( \hat U_{\hat k_{i-1},\hat k_{i+1}}(\hat h^{-1}(s),t)-(s \land \hat h_i(s_i) -s\hat h_i(s_i))(\mu_{i+1}(t)-\mu_{i}(t))\right) \right\}_{(s,t) \in [0,1]^2 } \rightsquigarrow 
 \{ \mathbb{W}(s,t) \}_{(s,t) \in [0,1]^2 },
\end{align*}
where $\mathbb{W}$ is a centered Gaussian process on $[0,1]^2$ with covariance structure 
\begin{align}
\nonumber 
    \text{\rm Cov}(\mathbb{W}(s,t),\mathbb{W}(s',t'))=(s\land s' -ss')  \sum_{i=-\infty}^{\infty} \text{\rm Cov}(\epsilon_{n,0}(t),\epsilon_{n,i}(t))   ~.
\end{align}
 The latter definition does not depend on the choice of $j$ by the row-wise stationarity of the error process array.
The convergence also holds jointly with respect to $1 \leq i \leq m$.
\end{theorem}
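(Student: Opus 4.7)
The plan is to substitute the model $X_{n,j} = \mu_{n,j} + \epsilon_{n,j}$ into $\hat U_{\hat k_{i-1},\hat k_{i+1}}$ and handle the signal and noise contributions separately, working throughout on the event $E_n = \{\hat m = m,\ \max_i |\hat s_i - s_i| \leq \log n / n\}$, which has probability tending to one by Theorem \ref{consistent_num}.

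For the signal part, $\mu_{n,j}$ equals $\mu_i$ on $(\hat k_{i-1}, k_i]$ and $\mu_{i+1}$ on $(k_i, \hat k_{i+1}]$ on $E_n$. A direct evaluation of the CUSUM applied to this piecewise-constant sequence produces a triangular Brownian-bridge-shaped function of $s$ multiplied by $(\mu_{i+1}(t)-\mu_i(t))$. The discrepancy to the drift $(s\wedge \hat h_i(s_i) - s\hat h_i(s_i))(\mu_{i+1}(t)-\mu_i(t))$ being subtracted in the statement is bounded uniformly in $(s,t)$ by $O(1/\hat n_i) + O(|\hat k_i - k_i|/\hat n_i) = O_P(\log n /n) = o_P(1/\sqrt{n_i})$, so the signal contribution is exactly absorbed into the centering and contributes nothing to the limit.

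For the noise part, define the partial-sum process $S_n(u,t) := (1/\sqrt{n}) \sum_{j=1}^{\lfloor nu\rfloor} \epsilon_{n,j}(t)$. Under assumptions (A1), (A2), (A4) together with (A3), an invariance principle for $\phi$-mixing $C([0,1])$-valued arrays yields $S_n \rightsquigarrow \mathcal{G}$ in $C([0,1]^2)$, where $\mathcal{G}$ is a centred Gaussian process with kernel $(u \wedge u')\sum_k \text{\rm Cov}(\epsilon_{n,0}(t),\epsilon_{n,k}(t'))$. Finite-dimensional convergence follows from the $\phi$-mixing CLT under (A2), (A4); two-parameter tightness follows from chaining bounds that combine the Hölder-in-$t$ control from (A3) (giving $\mathbb{E}|\epsilon_{n,j}(t)-\epsilon_{n,j}(t')|^J \leq \mathbb{E}[M^J] |t-t'|^{\theta J}$ with $\theta J>1$) with the $\phi$-mixing Rosenthal bound $\mathbb{E}|\sum_{j=k+1}^{k+h} \epsilon_{n,j}(t)|^J \lesssim h^{J/2}$ (as in the proof of Lemma \ref{pl1}, via Theorem 2.1 of Xuejun 2009). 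Once $S_n \rightsquigarrow \mathcal{G}$ is in hand, the rescaled error CUSUM $\sqrt{n_i}\hat U^{(\epsilon)}_{\hat k_{i-1}, \hat k_{i+1}}(\hat h_i^{-1}(s), t)$ can be written as the explicit continuous Brownian-bridge-type functional
\begin{equation*}
\bigl(S_n(\hat h_i^{-1}(s), t) - S_n(\hat s_{i-1}, t)\bigr) - s \bigl(S_n(\hat s_{i+1}, t) - S_n(\hat s_{i-1}, t)\bigr) + o_P(1)
\end{equation*}
of $S_n$, up to a rescaling factor $\sqrt{n_i/\hat n_i} = 1+o_P(1)$. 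The consistency $\hat s_{i\pm 1} \to s_{i\pm 1}$ from Theorem \ref{consistent_num} combined with the sample continuity of $\mathcal{G}$ permits replacing the random window endpoints by the true $s_{i\pm 1}$ at cost $o_P(1)$, after which the continuous mapping theorem delivers the limit $\mathbb{W}(s,t) = \mathcal{G}(h_i^{-1}(s)\cdot c_i, t) - s\mathcal{G}(c_i,t)$ (up to an affine reparametrisation within the window that yields exactly the stated covariance kernel $(s\wedge s' - ss')\sum_k \text{\rm Cov}(\epsilon_0(t),\epsilon_k(t'))$).

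The joint convergence over $i=1,\dots,m$ requires no new argument: all $m$ processes are continuous functionals of the \emph{single} limit $\mathcal{G}$, so their joint weak limit follows from one more application of continuous mapping (the resulting $\mathbb{W}_i$ have the marginal law of $\mathbb{W}$ but are, in general, dependent across $i$ since the windows $[\hat s_{i-1},\hat s_{i+1}]$ overlap). The main obstacle is the two-parameter tightness of $S_n$ in $C([0,1]^2)$ under the sup-norm; it is exactly to control the joint modulus in $(u,t)$ that the Hölder assumption (A3) with $\theta J>1$, the moment assumption (A2) with $\nu\geq 2$, and the quantitative mixing condition (A4) have been calibrated, and the proof requires chaining both in the time index $u$ (using maximal inequalities for $\phi$-mixing partial sums) and in the functional index $t$ (using the Kolmogorov–Chentsov criterion supplied by (A3)). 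A subsidiary technical issue is the uniform-in-$(s,t)$ replacement of the random indices $\hat k_{i\pm 1}$ by their deterministic counterparts, which is resolved by the rate $|\hat s_i - s_i| \leq \log n / n$ from Theorem \ref{consistent_num} together with equicontinuity of the limit $\mathcal{G}$.
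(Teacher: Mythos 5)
Your proof is correct and follows the same two-stage skeleton as the paper: first establish weak convergence of the CUSUM over the \emph{deterministic} windows $[k_{i-1},k_{i+1}]$, then show that replacing $k_{i\pm1}$ by $\hat k_{i\pm1}$ costs only $o_{\p}(1/\sqrt{n})$ uniformly in $(s,t)$ on the event $\{\hat m=m,\ \max_i|\hat s_i-s_i|\le \log n/n\}$ (this is exactly the paper's Lemma \ref{pl2} and the bound \eqref{pb1}). Where you diverge is in how the first stage is executed: the paper simply cites the proof of Theorem 4.1 of \cite{dette2020functional} for the fixed-window convergence and then verifies the joint convergence over $i=1,\dots,m$ by a hand computation of the finite-dimensional distributions of the vector $(\hat U_{0,k_2},\dots,\hat U_{k_{m-1},1})^\top$, whereas you derive everything from a single two-parameter invariance principle $S_n(u,t)=n^{-1/2}\sum_{j\le \lfloor nu\rfloor}\epsilon_{n,j}(t)\rightsquigarrow \mathcal{G}$ (fidis from the $\phi$-mixing CLT, tightness by chaining in $u$ via the mixing maximal/Rosenthal inequalities and in $t$ via the H\"older condition (A3) with $\theta J>1$) and then express each window's CUSUM as a continuous Brownian-bridge functional of $S_n$. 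Your route is more self-contained and makes the joint convergence over $i$ (and the cross-window dependence of the limits $\mathbb{W}_i$) an immediate consequence of the continuous mapping theorem applied to one limit object, at the price of having to establish the two-parameter tightness of $S_n$ yourself --- which is precisely the content of the external result the paper invokes, so no substantive gap remains. One minor imprecision: on the good event the window $(\hat k_{i-1},\hat k_{i+1}]$ may still pick up $O(\log n)$ observations with mean $\mu_{i-1}$ or $\mu_{i+2}$ at its edges, not only $\mu_i$ and $\mu_{i+1}$; your $O_{\p}(\log n/n)=o_{\p}(n^{-1/2})$ bound on the signal discrepancy absorbs this, but the case split should be stated.
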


\begin{proof}

Let $k_i=\floor{ns_i}$ $(i=0,...,m+1)$ and consider the process
\begin{align}
 \nonumber 
 \hat U_{k_{i-1},k_{i+1}} (h^{-1}(s),t)  =& \frac{1}{n_i} \Big ( \sum_{j = k_{i-1}+1}^{\lfloor h^{-1}_i(s)n \rfloor} X_{n,j} (t) +  n_i \Big  (h^{-1}(s)- \frac{\lfloor h^{-1}(s)n_i \rfloor }{n_i} \Big  ) X_{n,k_{i+1}} (t) \\
 &\quad- s\sum_{j=k_{i-1}+1}^{ k_{i+1}} X_{n,j} (t)\Big ),  
 \nonumber %
\end{align}
on $[0,1]^2$, where $n_i=n(s_{i+1}-s_{i-1})$. By the proof of Theorem 4.1 from \cite{dette2020functional} it follows that the statement of Theorem \ref{thm7} holds if $\hat U_{\hat k_{i-1},\hat k_{i+1}}(\hat h^{-1}(s),t)$ is replaced by $  \hat U_{k_{i-1},k_{i+1}}(h^{-1}(s),t)$.  
By equation \eqref{pb1} from the proof of Lemma \ref{pl2} the difference  
$$
\hat U_{\hat k_{i-1},\hat k_{i+1}}(\hat h^{-1}(s),t) -    \hat U_{k_{i-1},k_{i+1}}(h^{-1}(s),t)
$$
is  of order $o_\p(1/\sqrt{n})$ uniformly with respect to  $s,t \in [0,1]$. Therefore the first assertion follows.
The second assertion is a  consequence of the weak convergence of the finite dimensional distributions of the vector $( \hat U_{0,k_2}, \ldots ,  \hat U_{k_{m-1},1})^\top $, which can be verified by tedious but straightforward calculations (note that equicontinuity is obvious as each component is equicontinuous).
\end{proof}

To continue with the proof of Theorem \ref{thm4}  define 
\begin{align*}
 D_n&:= \max_{1 \leq i \leq m}    \big \{ 
\sqrt{n_i}\big ( M_{n,i}-h_i(s_i)(1-h_i(s_i))\|{\mu_{i}-\mu_{i-1}} \|_\infty \big ) \big \} 
\end{align*}
where $ M_{n,i}$ 
is obtained from $\hat M_{n,i}$ by replacing the estimates  $\hat s_i$
by $s_i$. Consequently,  $D_n$ is the analog of  $ \hat D_n$  in \eqref{d9}, where  the estimates $\hat s_i$ have been replaced by the unknown change points. As  each $  M_{n,i}$ is a supremum taken over a fixed interval we can apply the delta method for directionally Hadamard differentiable functions \citep[see][Corollary 2.3]{carcamo2020directional} and  obtain from  Theorem \ref{thm7}  that
\begin{align*}
    \big \{ 
& \sqrt{n_i}\big ( M_{n,i}-h_i(s_i)(1- h_i(s_i))\|{\mu_{i}-\mu_{i-1}}\|_\infty)\big ) \big \}_{i=1, \ldots ,m}  \\
& ~~~~~~~~~~~~~~~~~~~~~~~~~~~~~~~~~~~~~~~~~
\overset{\mathcal{D}}{\rightarrow} 
\max\Big \{ \sup_{t\in \mathcal{E}_i^+} \mathbb{W} (h_i( s_i), t), \sup_{t\in \mathcal{E}_i^-} -\mathbb{W} (h_i( s_i), t)   \Big \}.
\end{align*}
By  the continuous mapping theorem we get $  D_n \overset{\mathcal{D}}{\rightarrow} D(\mathcal{E}).  $
The proof of Theorem \ref{thm4} is  completed by  an application of 
    Lemma \ref{pl2}, which shows that we  can  replace each   $ M_{n,i}$  by $\hat M_{n,i}$, yielding $ \hat  D_n \overset{\mathcal{D}}{\rightarrow} D(\mathcal{E}).  $

\subsubsection{Proof of Theorem \ref{thm5}} \label{sec622} 

    The first part follows directly  from Theorem \ref{thm4} using  the inequality $\hat D_n \leq \hat T_n$ which holds under the null hypothesis.   For the second part we first observe that $\| {\mu_{i}-\mu_{i-1}}\|_\infty> \Delta$ for at least one $i \in \{1, \ldots , m\}$. We denote the index with the largest jump size (with respect to the sup-norm) by $i_0$. Recalling the definition of  $\hat T_{n,i}$ in \eqref{d4} and  $D_{n,i}$ in \eqref{d4a} we have     
    \begin{align}
\nonumber 
        \hat T_n \geq&  \hat T_{n,i_0}  =  D_{n,i_0} +  \hat h_i(\hat{s}_{i_0})(1-\hat h_i(\hat{ s}_{i_0}))
        \big ( \|{\mu_{i_0}-\mu_{i_0-1}}\|_\infty - \Delta \big ).
    \end{align}
    By Theorem \ref{thm4} the sequence $(D_{n,i_0} )_{n\in \mathbb{N}}$ is tight, the right hand side of the preceding equation thus diverges to positive infinity. As the quantile $q_{1-\alpha}$ is a bounded quantity the theorem follows.

\subsubsection{Proof of Theorem \ref{thm6}} \label{sec623} 

 Recall the definition of $\hat U_{l,r}$ in \eqref{hd1} and define  $M_{n,i}=\|{ \hat U_{k_{i-1},k_{i+1}}(h^{-1}_i(s),t)}\|_{\infty}$ and $\hat M_{n,i}=\| {\hat U_{\hat k_{i-1},\hat k_{i+1}}(\hat h^{-1}(s),t)}\|_{\infty}$. The proof follows directly from the following two statements.

\begin{lemma}
\label{pl2}
Under the assumptions of Theorem \ref{thm6} we have 
$ \sqrt{n_i}(M_{n,i}-\hat M_{n,i})=o_\p(1)
$.
\end{lemma}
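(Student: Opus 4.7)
The plan is to reduce Lemma \ref{pl2} to a uniform bound on the difference of the two underlying CUSUM processes. Since $\bigl|\|f\|_\infty-\|g\|_\infty\bigr|\le \|f-g\|_\infty$ on $C([0,1]^2)$, it suffices to establish
$$
\sqrt{n_i}\sup_{(s,t)\in[0,1]^2}\Big|\hat U_{\hat k_{i-1},\hat k_{i+1}}(\hat h_i^{-1}(s),t)-\hat U_{k_{i-1},k_{i+1}}(h_i^{-1}(s),t)\Big|=o_\p(1).
$$
This is precisely the auxiliary bound (the equation \eqref{pb1}) already invoked in the proof of Theorem \ref{thm7}, so establishing it will simultaneously discharge both usages.

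First I would restrict attention to the event $A_n=\{\hat m=m,\ \max_{1\le j\le m}|\hat k_j-k_j|\le \log n\}$, which has probability tending to one by Theorem \ref{consistent_num}. On $A_n$, the normalising constants differ by $|1/(\hat k_{i+1}-\hat k_{i-1}) - 1/(k_{i+1}-k_{i-1})|=O(\log n/n^2)$, the endpoints of each inner sum differ by at most $O(\log n)$, and $|\hat h_i^{-1}(s)-h_i^{-1}(s)|\le C\log(n)/n$ uniformly in $s\in[0,1]$. I would then telescope the difference into a handful of pieces: swap the normalising constant; swap the two summation endpoints one at a time; and finally swap $\hat h_i^{-1}(s)$ by $h_i^{-1}(s)$ inside the floor functions and in the fractional-part correction. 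Writing $X_{n,j}=\mu_{n,j}+\epsilon_{n,j}$, the mean part of each swap contributes at most a sum of $O(\log n)$ uniformly bounded $\mu_{n,j}$'s divided by $\Theta(n)$, hence is $O(\log n/n)$, and therefore $O(\log n/\sqrt n)=o(1)$ after scaling by $\sqrt{n_i}$. For the noise part, each swap yields a partial sum of at most $O(\log n)$ consecutive $\phi$-mixing increments $\epsilon_{n,j}$, which is $O_\p((\log n)^{1/2})$ by the maximal inequality for $\phi$-mixing sequences used in Lemma \ref{pl1} (Theorem 2.1 of Xuejun et al.); again scaling by $\sqrt{n_i}/n$ gives $o_\p(1)$.

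The main obstacle is achieving the bounds \emph{uniformly} in $(s,t)$. Uniformity in $s$ is non-trivial because the set of indices being added or removed depends on $s$, but all such sets have cardinality $O(\log n)$ and their starting positions lie in a window of width $O(\log n)$ around the boundaries $k_{i-1},k_i,k_{i+1}$; a further application of the same $\phi$-mixing maximal inequality, taken over $O(\log n)$ starting positions and $O(n)$ grid points of $s$, preserves the $O_\p((\log n)^{1/2})$ rate up to logarithmic factors that are absorbed by the $1/\sqrt n$ scaling. Uniformity in $t$ is handled by the Hölder condition in Assumption (A3): I would discretise $[0,1]$ by a grid of polynomial size, apply a union bound over the grid, and control the discretisation residual using $M$, whose $J$-th moment is finite, so that this tail contribution is asymptotically negligible. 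Combining these estimates yields the claimed $o_\p(1)$ rate, completing the proof of Lemma \ref{pl2}.
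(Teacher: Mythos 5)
Your argument is correct and follows essentially the same route as the paper: restrict to the localization event from Theorem \ref{consistent_num}, bound the difference of sup-norms by the sup-norm of the difference, and reduce the latter to short partial sums of $O(\log n)$ consecutive errors controlled by the $\phi$-mixing moment inequality and Markov's inequality. The only quibble is that your union bound over $O(n)$ grid points of $s$ costs a polynomial factor $n^{1/J}$ rather than a merely logarithmic one, but this is still absorbed by the $1/\sqrt{n}$ scaling since $\nu \ge 2$ guarantees fourth moments, so the conclusion stands.
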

\begin{proof}
\label{proof_consistent_locs}
 It follows from  Theorem \ref{consistent_num} that $|\hat s_i-s_i|\leq \log(n)/n$ holds with high probability.
 We have 
 \begin{align*}
  &  \sqrt{n_i} ( \hat U_{k_{i-1},k_{i+1}}(h^{-1}_i(s),t)-{\hat U_{\hat k_{i-1},\hat k_{i+1}}(\hat h^{-1}(\tilde s),t)}) \\
 & \quad\quad\quad\quad\quad   = \frac{1}{\sqrt{n}}\bigg (\sum_{j=\lfloor sn \rfloor+1}^{\lfloor \tilde sn \rfloor}X_{n,j}(t) +n_i(s-\frac{\lfloor sn_i \rfloor}{n_i})X_{n,s_{i+1}}(t)
    \\
    & \quad\quad\quad\quad\quad  -\hat n_i(\tilde s-\frac{\lfloor \tilde s\hat n_i \rfloor}{\hat n_i})X_{n,\hat s_{i+1}}(t) -(h_i(s)-\hat h_i(\tilde s))\sum_{j=k_{i-1}+1}^{k_{i+1}} X_{n,j}(t) \Bigg)+o_\p(1)\\
     & \quad\quad\quad\quad\quad =\frac{1}{\sqrt{n_i}}\sum_{j=\lfloor sn \rfloor}^{\lfloor \tilde sn \rfloor}X_{n,j}(t)+o_\p(1) =\frac{1}{\sqrt{n_i}}\sum_{j=\lfloor sn \rfloor}^{\lfloor \tilde sn \rfloor}\epsilon_{n,j}(t)+o_\p(1)
 \end{align*}
 uniformly with respect to $t \in [0,1] $ and all 
$s<\tilde s$ with $\sqrt{n}\vert s-\tilde s\vert \leq \log(n)/\sqrt{n}$.
 An application of Markov's inequality then yields
 \begin{align}
 \label{pb1}
     \p\Big (\sqrt{n_i} \sup_{t \in [0,1]} \sup_{ |s-\tilde s |\leq \frac{\log(n)}{{n}}} \big |\hat U_{k_{i-1},k_{i+1}}(h^{-1}_i(s),t)-\hat U_{\hat k_{i-1},\hat k_{i+1}}(\hat h^{-1}(\tilde s),t)\big | > x \Big )  \leq 
    {2K  \frac{ \log n}{x}} +o(1) =o(1)    
 \end{align}
 Finally we observe that 
 \begin{align*}
      &\sqrt{n}\Big (\big \| \hat  U_{k_{i-1},k_{i+1}}(h^{-1}_i(s),t) \big \|_{\infty}-\norm{\hat U_{\hat k_{i-1},\hat k_{i+1}}(\hat h^{-1}(s),t)}_{\infty}\Big ) \\
      &\leq \sqrt{n} \sup_{t \in [0,1]} \sup_{ \sqrt{n}|s-\tilde s|\leq \frac{\log(n)}{\sqrt{n}}} | \hat  U_{k_{i-1},k_{i+1}}(h^{-1}_i(s),t)-\hat U_{\hat k_{i-1},\hat k_{i+1}}(\hat h^{-1}(\tilde s),t) | 
 \end{align*}
 on the set where $\sqrt{n}|s_i-\hat s_i|\leq \frac{\log(n)}{\sqrt{n}}$ for $i=1,...,m$. This set has probability tending to one which in combination with \eqref{pb1} yields the assertion.

\end{proof}

 \begin{lemma}
     \label{thm8}
      Define
     \begin{align*}
         \widehat{W}_{i,n}(s,t):=\hat U_{k_{i-1},k_{i+1}}(s,t)-(h_i(s\land s_i)-h(s)h_i(s_i))(\mu_i(t)-\mu_{i-1}(t))
     \end{align*}
     on the rectangle $[s_{i_1},s_{i+1}]\times [0,1]$.
     Under the assumptions of Theorem \ref{thm6} we have 
     \begin{align}
     \label{pb30}
      \widehat{W}_{n} := 
     \begin{pmatrix}
                  \widehat{W}_{1,n}, \widehat{W}_{1,n}^{(1)}, \dots, \widehat{W}_{1,n}^{(R)} \\
                   \widehat{W}_{2,n}, \widehat{W}_{2,n}^{(1)}, \dots, \widehat{W}_{2,n}^{(R)}\\ 
                   \vdots \\
                    \widehat{W}_{m,n}, \widehat{W}_{m,n}^{(1)}, \dots, \widehat{W}_{m,n}^{(R)}  
     \end{pmatrix}\rightsquigarrow W:=
     \begin{pmatrix}
          W_{1}, W_{1}^{(1)}, \dots, W_{1}^{(R)}\\
          W_{2}, W_{2}^{(1)}, \dots, W_{2}^{(R)}\\
          \vdots \\
          W_{m}, W_{m}^{(1)}, \dots, W_{m}^{(R)}  
     \end{pmatrix}~, 
     \end{align} 
     where $\{W_i^{(1)} | 1 \leq i \leq m \}, \ldots , \{W_i^{(R)} | 1 \leq i \leq m \} $ are $R$ independent copies of $\{W_i | 1 \leq i \leq m \}$, which are defined by
     \begin{align}
 \nonumber 
    \left\{W_i(s,t)\right\}_{(s,t)  \in [s_{i-1},s_{i+1}] \times [0,1]} \stackrel{\mathcal{D}} \sim \left\{ \mathbb{W} (h_i(s),t)\right\}_{(s,t) \in [s_{i-1},s_{i+1}] \times [0,1]}~.
\end{align} 
\end{lemma}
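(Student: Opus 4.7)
The plan is to decouple the joint weak convergence in \eqref{pb30} into (i) an unconditional FCLT for the ``data'' component $(\widehat W_{1,n},\ldots,\widehat W_{m,n})$, which is already essentially supplied by Theorem~\ref{thm7}, (ii) a conditional (on the data) FCLT for each bootstrap replica $\widehat W_{i,n}^{(r)}$, and (iii) an assembly step that exploits the mutual independence of the multiplier sequences $\{\xi_k^{(r)}\}_k$ and their independence of the sample to upgrade conditional convergence to joint unconditional convergence.

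First I would localize using Theorem~\ref{consistent_num}: on the event $\{\hat m=m,\ \max_i|\hat s_i-s_i|\le\log n/n\}$, whose probability tends to one, arguments analogous to those of Lemma~\ref{pl2} show that replacing $(\hat k_{i\pm 1},\hat h_i,\hat s_i)$ by $(k_{i\pm 1}, h_i,s_i)$ throughout $\widehat W_{i,n}$ and $\widehat W_{i,n}^{(r)}$ costs only an $o_{\mathbb{P}}(1)$ error in sup-norm. On the same event, $(\hat\mu_{2,\hat k_i}-\hat\mu_{1,\hat k_i})-(\mu_{i+1}-\mu_i)=O_{\mathbb{P}}(n^{-1/2})$ uniformly in $t$, so the random centering in $\hat Y_j=X_j-(\hat\mu_{2,\hat k_i}-\hat\mu_{1,\hat k_i})\mathbf{1}_{j>\lfloor\hat s_i n\rfloor}$ reduces, up to $o_{\mathbb{P}}(1)$ in the scaled sum, to the exact centering $X_j-(\mu_{i+1}-\mu_i)\mathbf{1}_{j>k_i}=\epsilon_j$. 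The unconditional joint weak convergence $(\widehat W_{1,n},\ldots,\widehat W_{m,n})\rightsquigarrow(W_1,\ldots,W_m)$ then follows directly from Theorem~\ref{thm7}, since $\widehat W_{i,n}$ is exactly the Brownian-bridge centering of $\hat U_{k_{i-1},k_{i+1}}$.

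Next I would prove the conditional FCLT for one replica. After the previous reduction, the oracle form of the block-multiplier process is, conditionally on the sample, a centered Gaussian field in $C([s_{i-1},s_{i+1}]\times[0,1])$ driven by
\begin{equation*}
\frac{1}{\sqrt{n_i}}\sum_{k=k_{i-1}}^{\lfloor sn\rfloor}\frac{1}{\sqrt L}\Bigl(\sum_{j=k}^{k+L-1}\epsilon_j(t)-\frac{L}{n_i}\sum_{j=k_{i-1}}^{k_{i+1}}\epsilon_j(t)\Bigr)\xi_k^{(r)}.
\end{equation*}
Its conditional covariance kernel converges in probability to $(s\wedge s'-ss')\sum_k\mathrm{Cov}(\epsilon_0(t),\epsilon_k(t'))$ by a standard moving-block variance computation for $\phi$-mixing sequences, which is justified precisely by the moment and mixing conditions $(\beta(2+\nu)+1)/(2+2\nu)<\bar\tau<1/2$ in (A4) together with (A2). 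Conditional tightness in sup-norm is obtained by a chaining argument in $t$ using the H\"older regularity (A3), combined with Gaussian maximal inequalities in $s$ applied to the multiplier increments. Taking the bridge transformation $\widehat W_{i,n}^{(r)}=\hat B_{i,n}^{(r)}(s,t)-\hat h_i(s)\hat B_{i,n}^{(r)}(\hat s_{i+1},t)$ then produces $W_i\stackrel{d}{=}\mathbb{W}(h_i(\cdot),\cdot)$ as the conditional weak limit in probability, jointly over $i=1,\ldots,m$ by the same argument applied coordinate-wise.

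Finally I would assemble everything. Because $\{\xi_k^{(1)}\}_k,\ldots,\{\xi_k^{(R)}\}_k$ are mutually independent and jointly independent of the sample, the $R$ conditional Gaussian fields are conditionally independent with a common limit law, so the conditional FCLT extends trivially across $r$. Since the conditional limit $(W_i^{(r)})_{i,r}$ does not depend on the data, a standard ``conditional weak convergence in probability to a non-random law'' argument (e.g.\ Kosorok, Theorem~10.4, or the Lemma~4.2 of B\"ucher--Kojadinovic) promotes this to unconditional joint weak convergence of the full array in \eqref{pb30}, with the original block $(W_1,\ldots,W_m)$ independent of the $R$ bootstrap copies. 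The main obstacle is the sup-norm conditional FCLT of the third paragraph: establishing uniform tightness of the moving-block multiplier process over a two-dimensional rectangle for a $C([0,1])$-valued $\phi$-mixing sequence is what forces the tight coupling between $L=n^\beta$, $\beta\in[1/5,2/7]$, and the mixing-moment condition on $\bar\tau$, and is the only place where abandoning the Hilbert-space framework in favour of $C([0,1])$ substantially complicates the proof.
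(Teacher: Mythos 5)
Your proposal is correct in outline but organizes the argument differently from the paper. The paper's proof works unconditionally throughout: it rescales each row to the unit square via $h_i$, invokes the proof of Theorem 4.3 of \cite{dette2020functional} (the single-change-point multiplier bootstrap result) to get weak convergence of each \emph{row} of the array $(\widehat W_{i,n},\widehat W_{i,n}^{(1)},\dots,\widehat W_{i,n}^{(R)})$, then checks convergence of the finite-dimensional distributions across both rows and columns and uses the fact that marginal tightness of the components implies tightness of the vector; the replacement of $(\hat k_{i\pm1},\hat h_i,\hat s_i)$ by their population counterparts is handled exactly as you suggest, via Lemma~\ref{pl2}. You instead condition on the data, prove a conditional multiplier FCLT for each replica (Gaussian conditional law, covariance convergence, chaining for tightness), and then de-condition via a B\"ucher--Kojadinovic-type lemma using the independence of the multiplier sequences. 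Both routes are legitimate: the conditional route makes the independence of the $R$ copies and of the data block transparent and is more self-contained in spirit, but it forces you to establish the sup-norm conditional tightness of the block-multiplier process from scratch --- precisely the work the paper outsources to the cited reference --- whereas the paper's route reduces the new content of this lemma to the (tedious but elementary) cross-row/cross-column fdd computation and the marginal-tightness observation. Your identification of the tightness of the two-dimensional multiplier field as the technical crux, and of the block-length/mixing trade-off as its source, matches where the real difficulty lies.
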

 \begin{proof}
 Define 
 \begin{align} \label{d12}
    \widetilde{W}_{n} (s,t) = \widehat{W}_{n}  (h_i(s),t) \mbox{  and  }  
 \widetilde{W} (s,t) =  {W} (\hat h_i(s),t)~.
 \end{align}
 Using 
      similar arguments as in the proof of Theorem \ref{thm7} (using the proof of Theorem 4.3 instead of 4.1 from \cite{dette2020functional}) we  obtain that each row  of   $\widetilde{W}_{n} $ converges weakly to the corresponding row of 
      $\widetilde W$.  A  straightforward but tedious argument shows that 
    the finite dimensional distributions over both rows and columns $\widetilde{W}_{n} $
    converge weakly to the corresponding finite dimensional distribution of $\widetilde W$.
   As marginal tightness is equivalent to tightness of the vector 
   we  therefore obtain $\widetilde{W}_{n}    \rightsquigarrow 
  \widetilde{W} $. The weak convergence in 
     \eqref{pb30} now follows by inverting the operation in \eqref{d12}.
 \end{proof}

We now turn to the proof of Theorem \ref{thm6}. For this purpose we note that it follows by the same arguments as in  \cite{dette2020functional} that 
$
        d_H (\hat{\mathcal{E}}_{i}^{\pm}, \mathcal{E}_i) \stackrel{\mathbb{P}} \rightarrow 0$ ($i=1, \ldots , m$),  where $d_H$ denotes the Hausdorff distance.
The assertion of Theorem \ref{thm6} now follows 
 from Lemma \ref{pl2} and   \ref{thm8}  in combination with Lemma B.3 in the same reference.

\subsection{Proof of Theorem \ref{thm3}}
\label{sec63}

For a proof of (i) note that when $\max_{1 \leq i \leq m}\norm{\mu_{i}-\mu_{i-1}}_\infty\leq \Delta$ we have by Theorem \ref{thm2} that 
    \begin{align*}
   \liminf_{n \to \infty}    \p \big (  \{\hat S_{\rm rel} =\emptyset \} \big ) = 1 -   \limsup_{n \to \infty} \p(\hat T_n >  q_{1-\alpha}^*)\geq 1-\alpha . 
    \end{align*}
\smallskip 

For  a proof of part (ii) we begin by showing \eqref{d10a}. For this purpose we note that
    \begin{align*}
     &    \{S_{\rm rel} \subset (\hat S_{\rm rel}+\log(n)/n)\}  \\ 
     & ~~~~~~~~~~~
     \supset  \{ \hat T_{n,i} > q^*_{1-\alpha}   \forall \,  i \text{ s.t. } s_i \in S_{\rm rel}\} \cap \{\hat m=m, \max_{1 \leq i \leq m }|\hat s_i-s_i|\leq \log(n)/n \}
    \end{align*}
 and recall by the arguments in the proof of Theorem \ref{thm5} that $\hat T_{n,i}>q^*_{1-\alpha}$ holds with high probability for any $i$ with $(\norm{\mu_{i}-\mu_{i-1}}_\infty-\Delta)>0$. 
 By Theorem \ref{consistent_num} it follows  that $\{\hat m=m, \max_{1 \leq i \leq m }|\hat s_i-s_i|\leq \log(n)/n \}$ holds with high probability which  yields the desired assertion.

    For equation \eqref{d10b} we first note that
    \begin{align*}
     \{\hat S_{\rm rel} \subset (S_{\rm rel }+\log(n)/n)\} \supset \{ \hat T_{n,i}\leq q^*_{1-\alpha}, i \text{ s.t. } s_i \notin S_{\rm rel}\} \cap \{S_r \subset (\hat S_{\rm rel}+\log(n)/n)\}
    \end{align*}
    and that 
    \begin{align}
    \label{p105}
        \p(\max_{i \notin S_{\rm rel}}\hat T_{n,i}>q_{1-\alpha}^*)\geq 1-\alpha
    \end{align}
   by Theorem \ref{thm2}. 
    For equation \eqref{hd10c} note that the inequality \eqref{p105} can be improved to $1-o(1)$ whenever 
    \begin{align*}
        \max_{1 \leq i \leq m, s_i \notin S_{\rm rel} }\|\mu_{i}-\mu_{i-1}\|_\infty <\Delta
    \end{align*} 
    observing the inequality
    \begin{align}
    \nonumber %
        \hat T_n\leq \hat D_n + \sqrt{n}(\max_{1 \leq i \leq m, s_i \notin S_{\rm rel} }\|\mu_{i}-\mu_{i-1}\|_\infty-\Delta)
    \end{align}
    and  that $ \hat D_n$  is  tight.

\end{document}